%
%
\documentclass[]{elsarticle}
\usepackage[T1]{fontenc}
\usepackage[latin9]{inputenc}
\usepackage{color}
\usepackage{amsmath}
\usepackage{amssymb}
\usepackage{amsthm}
\usepackage{graphicx}
\usepackage{esint}
\usepackage{url}
\usepackage{graphicx}
\usepackage{float}

\usepackage{color}
\newcommand{\blue}{\color{black}}
\newcommand{\black}{\color{black}}

\newcommand{\e}{\mathrm{e}}

\newcommand{\pO}{\vert_{\partial\Omega}}

\newtheorem{theorem}{Theorem}


\begin{document}

\title{A comparison of boundary correction methods for Strang splitting}
\journal{DCDS-B}

\author[uibk]{Lukas Einkemmer}
\author[uibk]{Alexander Ostermann\corref{cor1}}\ead{alexander.ostermann@uibk.ac.at}
\address[uibk]{Department of Mathematics, University of Innsbruck, Austria}
\cortext[cor1]{Corresponding author}

\begin{abstract}
In this paper we consider splitting methods in the presence of non-homogeneous boundary conditions. In particular, we consider the corrections that have been described and analyzed in Einkemmer, Ostermann 2015 and Alonso-Mallo, Cano, Reguera 2016. The latter method is extended to the non-linear case, and a rigorous convergence analysis is provided. We perform numerical simulations for diffusion-reaction, advection-reaction, and dispersion-reaction equations in order to evaluate the relative performance of these two corrections. Furthermore, we introduce an extension of both methods to obtain order three locally and evaluate under what circumstances this is beneficial.
\end{abstract}

\begin{keyword} splitting methods, Dirichlet boundary condition, order reduction, numerical comparison  \end{keyword}

\maketitle

\section{Introduction}
The present paper is concerned with the numerical solution of the partial differential equations that can be written as an abstract evolution equation
\begin{equation} \label{eq:problem}
\partial_t u = Au + f(u), \qquad u\pO=b, \qquad u(0)=u_0,
\end{equation}
where $f(u)$ is a non-stiff reaction (usually $f$ does only depend on $u$ but not on its derivatives) and $A$ is a linear differential operator. The latter is the reason for the stiffness of the spatial semi-discretization. In the present work we will consider \blue in particular \black $A=\partial_{xx}$ (a diffusion-reaction problem), $A=\partial_x(a(x){}\cdot{})$ (an advection-reaction problem), and $A=i\partial_{xx}$ (a dispersion-reaction problem). \blue Let us emphasize that dependent on the problem type, boundary conditions can only be prescribed at a certain part of the boundary. For instance, the before mentioned advection-reaction problems only admit boundary conditions at the inflow boundary. On the other hand, for parabolic problems we have to prescribe appropriate conditions on the whole boundary. In the part of the analysis common to all problem classes introduced above, we will use $\partial \Omega$ to denote the part of the boundary at which conditions are imposed. This is mainly done for notational simplicity. \black

A popular method to solve this class of partial differential equations is splitting. The basic idea of splitting is to decompose equation (\ref{eq:problem}) into
\begin{equation} \partial_t v = Av, \qquad v\pO=b, \qquad v(0)=v_0 \label{eq:pf-A} \end{equation}
and
\begin{equation} \partial_t w = f(w), \qquad w(0)=w_0. \label{eq:pf-f} \end{equation}
In order to simplify the notation we will use
\[ v(t) = \varphi_{t}^A(v_0) \quad \text{and} \quad w(t)=\varphi_{t}^f(w_0) \]
to denote these two partial flows. The Strang splitting procedure can then be written as
\[ u_{n+1} = S_{\tau}(u_n) = \varphi_{\frac{\tau}{2}}^f \circ \varphi_{\tau}^A \circ \varphi_{\frac{\tau}{2}}^f (u_n). \]
In the absence of boundary conditions, it is second order accurate. Clearly, splitting is only viable if a procedure exists to efficiently solve the two partial flows (\ref{eq:pf-A}) and (\ref{eq:pf-f}). However, since the reaction is not stiff and good preconditioners are known for a large class of linear operators $A$, such an approach can be significantly more efficient than applying a monolithic implicit Runge--Kutta or multistep method (which requires both a nonlinear and a linear solver).
Consequently, a significant body of research has been devoted to splitting methods. Among their applications are diffusion-reaction equations \cite{descombes2001,gerisch2002,einkemmer2015,hundsdorfer2003book,einkemmer2016}, advection-reaction equations \cite{hundsdorfer1995,spee1998,hundsdorfer2003book}, diffusion-reaction-advection equations \cite{dawson1991,hundsdorfer2003book}, Schr\"odinger-type equations \cite{bao2002,lubich2008,faou2012}, dispersive equations \cite{klein2011,holden2013,einkemmer1408}, and kinetic equations \cite{cheng1976,grandgirard2006,einkemmer1207,einkemmer1401,casas2015,einkemmer1602}.

In the present paper, however, we are concerned with splitting in the presence of non-homogenous Dirichlet boundary conditions. In this case the Strang splitting procedure is only first order accurate \cite{hundsdorfer1995,hundsdorfer2003book,einkemmer2015,alonsomallo2016} which is a significant problem in applications.
The classic approach to address this problem can be found in \cite{leveque1983,connors2014,alonsomallo2015,alonsomallo2016}. However, more recently an alternative approach has been introduced and analyzed in \cite{einkemmer2015,einkemmer2016}. In the following we will provide a brief description of both methods. A more detailed explanation is given in section~\ref{sec:corrections}.

The classic approach is based on a modified time-dependent boundary condition for equation (\ref{eq:pf-A}). This method has been analyzed in \cite{alonsomallo2016}, where the authors consider the application to a linear partial differential equation using a dimension splitting approach. This approach can also be seen in the context of similar corrections that have been developed for implicit Runge--Kutta \cite{carpenter1993} and Lawson-type methods \cite{alonsomallo2015}. In the following we will refer to this method by the abbreviation TDBC (time-dependent boundary correction).

In \cite{einkemmer2015} a correction based on a different partitioning of the two partial flows has been introduced. For diffusion-reaction equations it has been shown that this numerical method can attain second order accuracy in the presence of non-trivial boundary conditions. Since the scheme is based on enforcing a so-called compatibility condition (between the boundary data and the reaction term), we will use the abbreviation CEC (compatibility enforcing correction) to refer to this approach in the remainder of the paper.

\blue We note that order reduction in splitting methods can also be caused by non-smooth data. This, however, will not be studied here. Assuming that all occurring functions are sufficiently smooth, we restrict our attention to the effect of Dirichlet boundary conditions on the order of the method, and on strategies to remedy this situation.\black

In the present paper we \blue discuss an extension of \black the TDBC approach to \blue non-linear problems\black. This is rather straightforward and is the subject of section \ref{sec:corrections}. In addition, we will show how to extend both correction methods (TDBC and CEC) to obtain a local error of order three.
Then we will perform a convergence analysis for the TDBC correction in the non-linear case (section \ref{sec:convergence}). Note that this analysis is based on a similar analysis for the CEC method that has been performed in \cite{einkemmer2015}.
In section \ref{sec:numerics} we compare these two methods for three different classes of partial differential equations (diffusion-reaction, advection-reaction, and dispersion-reaction) and evaluate under which circumstances it is beneficial to use the third order correction. Finally, we conclude in section \ref{sec:conclusion}.

\section{Splitting corrections \label{sec:corrections}}

In this section we describe the two methods used to attain higher order splitting schemes in the presence of non-trivial boundary conditions in some detail.

However, before doing so, we will state the so-called compatibility conditions for the problem under consideration. These are derived by taking time derivatives of equation (\ref{eq:problem}) and using the fact that, for time-invariant boundary data, $\partial_t u \pO = \partial_t b = 0$. From this procedure we obtain the first compatibility condition
\[ Au + f(u) \pO = 0 \]
and the second compatibility condition
\[ A^2 u + Af(u) \pO = 0, \]
both of which we will use extensively in the remainder of the paper.

For notational simplicity, we will mainly discuss time-invariant boundary data in this paper. We note, however, that our analysis easily generalizes to the time-dependent case with the obvious modifications. For instance, the right-hand sides of the first and the second compatibility conditions have to be replaced by $\partial_t b$ and \blue $\partial_{tt}b - f'(b)\partial_t b$, \black respectively.

First, let us consider the CEC method (introduced in \cite{einkemmer2015}). This method starts with the observation that no order reduction is observed for homogeneous Dirichlet boundary conditions and the (physically reasonable) assumption that $f(0)=0$. This observation can be easily extended to more general reactions. In this case the requirement is that reaction leaves the boundary data invariant (i.e.~$f(b)=b$).

We then introduce a correction $q$ (which does \textit{not} depend on $u$ or $t$ for time-invariant Dirichlet boundary conditions) and instead of equations (\ref{eq:pf-A}) and~(\ref{eq:pf-f}) we solve
\[ \qquad \partial_t v = Av + q, \qquad v\pO=b, \qquad v(0)=v_0 \]
and
\[ \partial_t w = f(w) - q, \qquad w(0)=w_0. \]
The correction is chosen such that
\[ q\vert_{\partial\Omega}=f(b) \]
which leaves considerable freedom in how $q$ is determined inside the domain. Note, however, that the numerical simulations conducted in \cite{einkemmer2016} suggest that smooth functions with not too large derivatives perform best. It has been rigorously shown in \cite{einkemmer2015} that this correction gives a local error of order two and (using the parabolic smoothing property \blue in the framework of analytic semigroups\black) also a global error of order two.

In the previous work \cite{einkemmer2015,einkemmer2016} this correction has been used to obtain a second order accurate scheme for parabolic problems. It is, however, possible to extend this method in order to obtain a local error of order three. From the convergence analysis performed in \cite{einkemmer2015} it follows that we have to ensure that $A(f(u)-q)\vert_{\partial\Omega} = 0$. That is, we have to find a correction $q$ such that
\[ q\vert_{\partial\Omega} = f(b), \qquad Aq\vert_{\partial\Omega} = Af(u)\vert_{\partial\Omega}. \]
This amounts to the construction of a correction that satisfies two boundary conditions (one for $q$ and one for $Aq$). Since $q$ does not have to satisfy any condition in the interior of the domain this is certainly possible in principle. However, evaluating $Af(u)$ by numerical differentiation \blue will introduce additional contributions the local error. This might  increase the error constant of the corrected Strang splitting \black significantly.

As an example \blue where $Af(u)$ can easily be evaluated at the boundary, let us consider the first order differential operator of the form $A=\nabla\cdot(a(x){}\cdot{})$ with inflow boundary $\Gamma$\black. There we have
\[ Af(u) = \nabla\cdot(af(u)) = f^\prime(u) Au + (\nabla \cdot a)(f(u)-f^\prime(u)u) \]
and thus on the boundary
\blue \[ Af(u) \vert_{\Gamma} = - f^\prime(b)f(b) + (\nabla\cdot a)(f(b)-f^\prime(b)b). \]
In the previous equation we have used the first compatibility condition (i.e.{} $Au\vert_{\Gamma}=-f(u)\vert_{\Gamma}$). \black
Thus, we can completely determine the correction by using the boundary data and function evaluations of $f$.

Unfortunately, this is not true for more general operators. For example, for $A=c\Delta$ we have
\[ Af(u) = c f^{\prime\prime}(u) (\nabla u)^2 + f^\prime(u) A u \]
which on the boundary gives
\[ Af(u)\vert_{\partial\Omega} = c f^{\prime\prime}(b) (\nabla u\pO)^2 - f^\prime(b) f(b). \]
Note that this still requires to evaluate the first derivative of $u$ (an improvement over a direct application of $A$ which would require two derivatives). However, we are not able to completely eliminate the need for numerical differentiation.

The advantage of the CEC approach is that the modification of the partial flows are mild. \blue For time-invariant $b$\black, the correction only adds an inhomogeneity that is independent of time and independent of the solution. In addition, it was shown in \cite{einkemmer2016} that this method can be easily extended to Neumann and mixed boundary conditions. The main disadvantage of this approach is that the correction $q$ has to be computed; although, at least for time-invariant Dirichlet boundary conditions, this has to be done only once at the beginning of the simulation (i.e.~there is no performance penalty).

Second, let us consider the TDBC method which suggests to use a numerical scheme for solving the linear partial flow  (\ref{eq:pf-A}) that satisfies the same boundary condition as the Taylor expansion
\begin{equation} V(s) = v_0 + s Av_0 + \frac{s^2}{2} A^2 v_0. \label{eq:taylor-A} \end{equation}
	Note that restricting the above Taylor expansion to the boundary does \textit{not} yield the original boundary data $b$. It is perhaps not entirely obvious why this method works at all. This can be seen most easily by noting that the Taylor expansion of the Strang splitting algorithm (using (\ref{eq:taylor-A}) to approximate the linear partial flow) yields%
\blue\footnote{\blue As we are working in a framework of smooth solutions and data, the application of the differential operator $A$ is always well defined and bounded. This justifies the use of Landau notation here and in the following.}\black
\[ u + \tau(Au+f(u)) + \tfrac{\tau^2}{2}f^\prime(u)(Au+f(u)) + \tfrac{\tau^2}{2}A(Au + f(u)) + \mathcal{O}(\tau^3), \]
which restricted to the boundary and by using the first and second compatibility condition gives just $b+\mathcal{O}(\tau^3)$ (i.e.~the prescribed boundary condition up to third order accuracy). On the other hand, it is clear that if we enforce the prescribed boundary condition in the middle step of the Strang splitting algorithm, we obtain by restricting the solution to the boundary
\[ b + \tfrac{\tau}{2}f(b) + \tfrac{\tau^2}{8}f^\prime(b)f(b) \]
which, in general, is only first order accurate.

Therefore, the problem at hand is how to incorporate these modified boundary data into a numerical integrator (it should be clear that simply using the Taylor expansion is unstable). To do this we first expand the flow of the non-stiff nonlinearity into a Taylor series
\begin{equation}  v_0 = \varphi_{\tau/2}^f(u_0) = u_0 + \tfrac{\tau}{2}f(u_0) + \tfrac{\tau^2}{8}f^{\prime}(u_0)f(u_0) + \mathcal{O}(\tau^3) \label{eq:taylor-phif} \end{equation}
and substitute this expression into equation (\ref{eq:taylor-A}) in order to obtain (for $s\leq \tau$)
\begin{align}
	V(s)&=u_{0}+\frac{\tau}{2}f_{0}+\frac{\tau^{2}}{8}f_{0}^{\prime}f_{0} +sAu_{0}+\frac{s\tau}{2}Af_{0}+\frac{s^{2}}{2}A^{2}u_{0} + \mathcal{O}(\tau^3), \label{eq:correction-formula-raw}
\end{align}
where we have used $f_0=f(u_0)$ and $f_0^\prime = f^\prime(u_0)$ as a shorthand notation.
Now, this is still not a useful procedure in the sense that in order to obtain a scheme that has local order two (i.e.~neglecting the $\mathcal{O}(\tau^2)$ terms in equation (\ref{eq:correction-formula-raw})) we still have to perform numerical differentiation in order to compute the application of $A$. However, we can use the compatibility \blue conditions
\[ Au \vert_{\partial\Omega} = -f(b),\qquad A^2 u\vert_{\partial\Omega} = -A f(u)\vert_{\partial\Omega}\]
\black to obtain
\begin{align*}
	\overline{V}(s) = b+\frac{\tau}{2}f(b)+\frac{\tau^{2}}{8}f^{\prime}(b)f(b) - sf(b) + \frac{s(\tau-s)}{2}Af_{0} \pO = V(s)\pO + \mathcal{O}(\tau^3). \label{eq:correction-formula}
\end{align*}
This then suggests that instead of equation (\ref{eq:pf-A}) we solve
\begin{equation}\label{eq:pf-A-mod}
\partial_t v = Av,\qquad v\pO = \overline{V}, \qquad v(0)=v_0,
\end{equation}
while no modification is made to equation (\ref{eq:pf-f}).
We will show in section \ref{sec:convergence} that taking \blue all first order terms in $V$, i.e.~choosing
\begin{equation}
	\overline{V}(s) = b+\left(\frac{\tau}{2}-s\right)f(b) = V(s)\pO + \mathcal{O}(\tau^2) \label{eq:pf-Av}
\end{equation}
\black is sufficient to obtain a numerical scheme that is locally and globally second order accurate (under the assumption that the parabolic smoothing property holds true).

Let us now extend this procedure to local order three. Similar to the results obtained for the CEC method, for a differential operator in divergence form $A=\nabla\cdot(a(x){}\cdot{})$ we have
\[ Af(u) \vert_{\partial\Omega} = - f(b)^\prime f(b) + (\nabla\cdot a)(f(b)-f^\prime(b)b). \]
Thus, we can completely determine the correction by using the boundary data and function evaluations of $f$. This, however, is not true for more general operators. For example, for $A=c\Delta$ we get
\[ Af(u)\vert_{\partial\Omega} = c f^{\prime\prime}(b) (\nabla u\pO)^2 - f^\prime(b) f(b). \]
and thus it is still required to evaluate the first derivative of $u$ by numerical differentiation.

The advantage of the present method is that no modification of the partial flow corresponding to the non-stiff reaction is necessary. In addition, no correction in the interior of the domain has to be computed. The disadvantages of this method is that, even for autonomous problems with Dirichlet boundary condition, the linear partial flow has to be solved with a time-dependent boundary condition.

\section{Convergence analysis \label{sec:convergence}}

The purpose of this section is to provide a mathematically rigorous convergence analysis for the TDBC approach. \blue A convergence analysis for the linear case is given in \cite{alonsomallo2016}, another one for the nonlinear Schr\"{o}dinger equation in \cite{canoreguera2017}\black. We will not consider the CEC (compatibility enforcing correction) approach here as the convergence analysis conducted in \cite{einkemmer2015} can be applied immediately to the present case.

For the convergence analysis we will assume that $A_0$, which we use to denote the differential operator $A$ endowed with homogeneous Dirichlet boundary conditions, generates a $\mathcal{C}_0$ semigroup. This, in particular, implies that the time evolution operator $\e^{tA_0}$ is well defined for all $t\geq 0$. The same holds true for $\varphi_k(tA_0)$, where the entire functions $\varphi_k(z)$ are given by the recurrence relation
\begin{equation} \varphi_{k+1}(z) = \frac{\varphi_k(z)-\frac{1}{k!}}{z}, \qquad \varphi_0(z)=\e^z. \label{eq:recurrence} \end{equation}
\blue Problem \eqref{eq:problem} will be studied in a Banach space $X$ with norm $\|\cdot\|$. The nonlinearity $f$ is a function on $X$ and assumed to be sufficiently smooth in a neighborhood of the exact solution. The following situation can be taken as a prototypical example for the whole section. Let $\Omega\in \mathbb R^d$ be a bounded domain with smooth boundary $\partial \Omega$. We consider the abstract parabolic problem \eqref{eq:problem} with $A=\Delta$ in the Hilbert space $X=L^2(\Omega)$. The Laplacian, endowed with homogeneous Dirichlet boundary conditions, will be called $A_0$ and generates an analytic semigroup with domain $H^2(\Omega)\cap H^1_0(\Omega)$.

We repeat once more that we only interested here in the effects of Dirichlet boundary conditions. Therefore, we consider for the analysis a framework of smooth data and solutions. As $A$ is a differential operator (with smooth coefficients), the application of $A$ to (spatially) smooth functions is always well defined and bounded. Note, however, that the application of $A_0$ to a function $g$ requires $g$ to satisfy in addition homogeneous Dirchlet boundary conditions. Therefore, terms involving $A_0$ must be handled with care.

For notational convenience, we will use Landau notation in this section. Note that a sloppy use of this notation caused some misunderstandings in the literature on the numerical analysis of stiff problems. In this paper, however, we will make strict use of Landau notation and include only terms that are reasonably bounded.
\black

Now, let us consider the evolution equation
\begin{equation} v^{\prime}=Av,\qquad v\vert_{\partial\Omega}=\rho \vert_{\partial\Omega}, \qquad v(0)=v_0, \label{eq:evol-A-corr} \end{equation}
where $\rho\blue(t)\black = \rho_0 + t \rho_1$ is assumed to be a smooth function \blue in space\black. In this framework we can write
\begin{align*}
(v-\rho)^{\prime} & =A(v-\rho)-\rho^{\prime}+A\rho\\
 & =A_{0}(v-\rho)+A\rho_{0}-\rho_{1}+tA\rho_{1}.
\end{align*}
To substitute $A_0$ for $A$ is possible since, by construction, $v-\rho$ is zero on the boundary. The solution of (\ref{eq:evol-A-corr}) can then be written as
\[ v(t) = \rho\blue(t)\black+t\varphi_{1}(tA_{0})(A\rho_{0}-\rho_{1})+t^{2}\varphi_{2}(tA_{0})A\rho_{1}. \]
Now we use $\rho_{0}=v_{0}$ (i.e.~that the boundary data given by $\rho$ provides a consistent approximation) to get (assuming that $t\leq\tau$)
\begin{align*}
	v(t) &= v_0 + t\rho_1 + t\varphi_{1}(tA_{0})(Av_0-\rho_{1})+t^{2}\varphi_{2}(tA_{0})A\rho_{1} \nonumber \\
			 &= v_0 + \mathcal{O}(\tau)  \label{eq:v1}.
\end{align*}
		
In general, there is no reason to believe that $Av_0-\rho_1$ lies in the domain of $A_0$. However, if we choose $\rho_1=-f(v_0)$ we have (due to (\ref{eq:taylor-phif}))
\[ Av_{0}-\rho_{1} = Av_{0}+f(v_{0}) = Au_{0}+f(u_{0})+\mathcal{O}(\tau). \]
\blue Note that $Au_0+f(u_0)$ vanishes \black on the boundary due to the first compatibility condition. Thus, we get
\[ v(t) = v_0 - tf(v_0) + t\varphi_{1}(tA_{0})(Au_0+f(u_0)) +
		t \varphi_1(tA_0)m
		- t^2\varphi_{2}(tA_{0})Af(v_0) \]
with
\[ m = A(v_0-u_0) + f(v_0)-f(u_0) = \mathcal{O}(\tau). \]
Further expanding the first $\varphi_1$ function (by using the recurrence relation (\ref{eq:recurrence}) \blue and the first compatibility condition\black) yields
\begin{equation}
	\begin{aligned} \label{eq:v-ord1}
	v(t) &= v_{0}+tAu_{0} + t(f(u_0)-f(v_0)) +t \varphi_1(tA_0)m \\
	& \quad  + t^{2}\varphi_{2}(tA_{0})A(Au_{0}+f(u_0))
	-t^{2}\varphi_{2}(tA_{0})Af(v_0) \\
	\end{aligned}
\end{equation}
and thus
\begin{align}
	v(t) &= v_0 + t Av_0 + \mathcal{O}(\tau^2). \label{eq:v2}
\end{align}

Now, we continue expanding (\ref{eq:v-ord1}) and obtain
\begin{align}
	v(t) & =v_{0}+tAv_{0}+\tfrac{t^{2}}{2}A^2u_0 + \tfrac{t^{2}}{2}A(f(u_{0})-f(v_0))+t^{2}\varphi_{2}(tA_{0})A_{0}m \nonumber \\
		 & \qquad +t^{3}\varphi_{3}(tA_{0})A_{0}A(Au_{0}+f(u_0))-t^{3}\varphi_{3}(tA_{0})A_{0}Af(v_{0}) \nonumber \\
		 & =v_{0}+tAv_{0}+\tfrac{t^{2}}{2}A^{2}v_{0} + t^2(\tau\varphi_2(tA_0)A_0\tilde{E} + t\varphi_3(tA_0)A_0\hat{E}) + \mathcal{O}(\tau^3). \label{eq:v3}
\end{align}
Before proceeding, let us note that while $\tilde{E}$ and $\hat{E}$ are bounded they do not lie in the domain of $A_0$. Thus, we can not simply absorb this part of the remainder into the $\mathcal{O}(\tau^3)$ term. \blue Nevertheless, \eqref{eq:v3} is well defined since $t\varphi_2(tA_0)A_0$ and $t\varphi_3(tA_0)A_0$ are bounded operators on $X$. We will see later by using the parabolic smoothing property that these two terms in \eqref{eq:v3} can be appropriately \black bounded as part of the convergence analysis. For now, the results obtained allow us to formulate the following theorem for the local error. Before stating the theorem let us note that we use $u(t)$ to denote the exact solution of equation (\ref{eq:problem}).

\begin{theorem} Let us assume that $A_0$ generates a $\mathcal{C}_0$ semigroup, that $f$ is twice differentiable, and that $u(0)$ is sufficiently smooth. Then performing the second order TDBC correction \blue \rm{(}i.e.~Strang splitting based on \eqref{eq:pf-f}, \eqref{eq:pf-A-mod}, and \eqref{eq:pf-Av}\rm{)} \black results in a numerical scheme for which the local error can be bounded as follows
	\[ \Vert S_{\tau}(u(t_n))-u(t_n+\tau) \Vert \leq C \tau^2, \]
	where $C$ is independent of $\tau$. In addition, \blue if $A$ generates an analytic semigroup \black we have
	\[ S_{\tau}(u(t_n))-u(t_n+\tau) =  \tau^3 A_0\overline{E} + \mathcal{O}(\tau^3), \]
	where $\overline{E}$ satisfies $\Vert \overline{E} \Vert \leq C$ and $\Vert A_0 \overline{E} \Vert \leq C/\tau$ with $C$ independent of $\tau$. \label{thm:consistency}
\end{theorem}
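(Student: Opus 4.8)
The plan is to compare the exact solution $u(t_n+\tau)$ and the numerical solution $S_\tau(u(t_n))$ by expanding both in powers of $\tau$ and matching terms, paying careful attention to which remainder terms lie in the domain of $A_0$ and which only admit the weaker bound afforded by the boundedness of $t\varphi_k(tA_0)A_0$. Without loss of generality I would set $n=0$ and write $u_0 = u(t_n)$. First I would record the Taylor expansion of the exact solution of \eqref{eq:problem}: since $u$ satisfies $u'=Au+f(u)$, repeated differentiation gives
\[
u(\tau) = u_0 + \tau(Au_0+f(u_0)) + \tfrac{\tau^2}{2}\bigl(A(Au_0+f(u_0)) + f'(u_0)(Au_0+f(u_0))\bigr) + \mathcal{O}(\tau^3),
\]
all terms here being bounded because $A$ is a differential operator acting on smooth functions, so no $A_0$-domain subtlety arises on the exact side.

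Next I would expand the numerical flow $S_\tau(u_0) = \varphi_{\tau/2}^f \circ \varphi_\tau^A \circ \varphi_{\tau/2}^f(u_0)$ step by step. The inner half-step gives $v_0 = \varphi_{\tau/2}^f(u_0)$ with the expansion \eqref{eq:taylor-phif}. The middle step is the linear flow \eqref{eq:pf-A-mod} with the time-dependent boundary data \eqref{eq:pf-Av}, i.e.\ $\rho(t) = v_0 - tf(b)$ in the notation of \eqref{eq:evol-A-corr} with $\rho_0 = v_0$ and $\rho_1 = -f(b)$. Here I note that the analysis leading to \eqref{eq:v3} used $\rho_1 = -f(v_0)$, whereas \eqref{eq:pf-Av} prescribes $\rho_1 = -f(b)$; but $f(b) = f(u_0)\pO$ and $f(v_0) = f(u_0) + \mathcal{O}(\tau)$ agree up to $\mathcal{O}(\tau)$ on the boundary, so I would first justify that replacing $f(v_0)$ by $f(b)$ changes $\rho_1$ only by a boundary-vanishing $\mathcal{O}(\tau)$ perturbation, which feeds into the $m$-type remainder already present. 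With that reconciliation, \eqref{eq:v3} applies and yields
\[
\varphi_\tau^A(v_0) = v_0 + \tau Av_0 + \tfrac{\tau^2}{2}A^2 v_0 + \tau^2\bigl(\tau\varphi_2(\tau A_0)A_0\tilde E + \tau\varphi_3(\tau A_0)A_0\hat E\bigr) + \mathcal{O}(\tau^3),
\]
with $\tilde E,\hat E$ bounded. Applying the final half-step $\varphi_{\tau/2}^f$ and expanding once more via \eqref{eq:taylor-phif} (now with base point $\varphi_\tau^A(v_0)$), I collect all terms through order $\tau^2$.

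The heart of the argument is then the cancellation: substituting $v_0 = u_0 + \tfrac{\tau}{2}f(u_0) + \mathcal{O}(\tau^2)$ into the expansion of $S_\tau(u_0)$ and comparing with the exact expansion above, the $\mathcal{O}(1)$ and $\mathcal{O}(\tau)$ terms match identically, and the $\mathcal{O}(\tau^2)$ terms coincide with those of standard (boundary-free) Strang splitting — one verifies that $\tfrac14 f'f + \tfrac14 Af + \tfrac14 A^2$-type combinations assemble correctly, so that the only genuine discrepancy at order $\tau^2$ is the term $\tau^2\bigl(\tau\varphi_2(\tau A_0)A_0\tilde E + \tau\varphi_3(\tau A_0)A_0\hat E\bigr)$ inherited from \eqref{eq:v3}, which I would repackage as $\tau^3 A_0\overline E$ with $\overline E = \varphi_2(\tau A_0)\tilde E/1 + \tau\varphi_3(\tau A_0)\hat E/1$ (absorbing the scalar $\tau$ appropriately). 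For the first assertion, since $t\varphi_k(tA_0)A_0$ is a bounded operator on $X$ (it equals $\varphi_{k-1}(tA_0)-\tfrac{1}{(k-1)!}$ by \eqref{eq:recurrence}) with norm bounded uniformly for $t\in[0,\tau]$ under the $\mathcal{C}_0$-semigroup hypothesis, we immediately get $\|S_\tau(u_0)-u(\tau)\|\le C\tau^2$. For the refined assertion, $\|\overline E\|\le C$ follows from boundedness of $\varphi_k(\tau A_0)$, and $\|A_0\overline E\| = \|A_0\varphi_2(\tau A_0)\tilde E + \cdots\|$; writing $\tau A_0\varphi_2(\tau A_0) = \varphi_1(\tau A_0) - I$ etc., this is $\|A_0\varphi_2(\tau A_0)\tilde E\|\le C/\tau$, which is exactly the analytic-semigroup smoothing estimate $\|A_0\varphi_k(tA_0)\|\le C/t$.

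The main obstacle I anticipate is the bookkeeping of the $A_0$-versus-$A$ distinction: one must be scrupulous that every application of $A_0$ is to a function genuinely vanishing on the boundary (differences like $v_0-u_0$, $f(v_0)-f(u_0)$, and the compatibility-condition combinations $Au_0+f(u_0)$, $A^2u_0+Af(u_0)$), while the terms written with $A$ act on smooth functions and are plainly bounded — conflating the two, or allowing an $A_0$ to land on a function not in its domain, is precisely the "sloppy Landau notation" pitfall the authors warn against. A secondary technical point is confirming that the order-$\tau^2$ terms of the corrected scheme reproduce those of classical Strang splitting exactly (so that the defect is purely the \eqref{eq:v3} remainder and not, say, a spurious $\tau^2$-size boundary term); this requires carefully expanding the outer $\varphi_{\tau/2}^f$ around the perturbed argument and checking the $f'(u_0)(Au_0+f(u_0))$ and $A(Au_0+f(u_0))$ coefficients against the exact Taylor expansion, using both compatibility conditions to handle the boundary behaviour of these quantities.
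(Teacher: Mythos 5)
Your proposal takes essentially the same route as the paper's proof: expand the inner half-step via \eqref{eq:taylor-phif}, represent the modified linear flow through the $\varphi$-function calculus culminating in \eqref{eq:v3}, push the result through the outer half-step, compare with the Taylor expansion of the exact solution, and control the defect using the boundedness of $t\varphi_k(tA_0)A_0$ in the $\mathcal{C}_0$ case and $\Vert A_0\varphi_k(\tau A_0)\Vert\le C/\tau$ in the analytic case. One caveat on the reconciliation you add (a point the paper itself glosses over): the perturbation $f(v_0)-f(b)$ is \emph{not} boundary-vanishing (its trace is $\tfrac{\tau}{2}f'(b)f(b)+\mathcal{O}(\tau^2)$), and likewise $v_0\pO\neq b+\tfrac{\tau}{2}f(b)$ exactly, so matching \eqref{eq:pf-Av} with the data $\rho_0=v_0$, $\rho_1=-f(v_0)$ analyzed before the theorem works not for the reason you state but because these mismatches are of size $\mathcal{O}(\tau)$ and $\mathcal{O}(\tau^2)$ respectively and can be absorbed into the $m$-type remainder and into a term $(\e^{tA_0}-I)\gamma=tA_0\varphi_1(tA_0)\gamma$ with $\gamma=\mathcal{O}(\tau^2)$, both of the admissible $\tau^3A_0\overline{E}+\mathcal{O}(\tau^3)$ structure.
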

\begin{proof}
Let us compute the local error of the splitting scheme. To start, we have
\[ v_0 = \varphi_{\tau/2}^f(u_0) = u_{0}+\frac{\tau}{2}f(u_{0})+\frac{\tau^{2}}{8}f^{\prime}(u_{0})f(u_{0}) + \mathcal{O}(\tau^3).
\]
Now, we use equation (\ref{eq:v2}) to obtain
\begin{equation} v(\tau) = u_0 + \frac{\tau}{2} f(u_0) + \tau A u_0 + \mathcal{O}(\tau^2) \label{eq:vtau-1} \end{equation}
or equation (\ref{eq:v3}) to obtain
\begin{equation}
	\begin{aligned}
		v(\tau)&=u_{0}+\frac{\tau}{2}f(u_{0})+\frac{\tau^{2}}{8}f^{\prime}(u_{0})f(u_{0})+\tau Au_{0} \\
						& \quad+\frac{\tau^{2}}{2}Af(u_{0})+\frac{\tau^{2}}{2}A^{2}u_{0}+\tau^{3}A_0\overline{E}+\mathcal{O}(\tau^{3}).
	\end{aligned} \label{eq:vtau-2}
\end{equation}
	Note that $\overline{E}$ is bounded. \blue If $A_0$ is the generator of an analytic semigroup, it holds that $\Vert \varphi_k(\tau A_0)A_0\Vert \leq C/\tau$ for $k\ge 1$. In this situation\black, we obtain the following bound for the remainder term $\Vert A_0 \overline{E}\Vert \leq C/\tau$.

Finally, we use $w_0=v(\tau)$ and equation (\ref{eq:vtau-1}) to obtain
\begin{align*}
	w(\tau) &= w_0 + \frac{\tau}{2}f(w_0) + \mathcal{O}(\tau^2) \\
				 &= u_0 + \tau(Au_0 + f(u_0)) + \mathcal{O}(\tau^2)
\end{align*}
and equations (\ref{eq:vtau-1}) and (\ref{eq:vtau-2}) to obtain
\begin{align*}
	w(\tau) &= w_{0}+\frac{\tau}{2}f(w_{0})+\frac{\tau^{2}}{8}f^{\prime}(w_{0})f(w_{0}) + \mathcal{O}(\tau^3)\\
 & =u_{0}+\tau(Au_{0}+f(u_{0}))+\frac{\tau^{2}}{2}\left(A(Au_{0}+f(u_{0}))+f^{\prime}(u_{0})(Au_{0}+f(u_{0}))\right) \\
	& \qquad + \tau^{3}A_0\overline{E} + \mathcal{O}(\tau^{3}).
\end{align*}
The results obtained above can be compared to the expansion of the exact solution
\[ u(\tau)=u_{0}+\tau(Au_{0}+f(u_{0}))+\frac{\tau^{2}}{2}\bigl(A(Au_{0}+f(u_{0}))+f^{\prime}(u_{0})(Au_{0}+f(u_{0}))\bigr)+\mathcal{O}\left(\tau^{3}\right) \]
which immediately yields the desired expressions for the local error.
\end{proof}

Now let us show that this result is in fact sufficient to obtain global convergence of order two in the case of analytic semigroups. In the following we will use $u_n$ to denote the numerical approximation at time $t_n=n\tau$.

\begin{theorem} Let us assume that $A_0$ generates an analytic semigroup, that $f$ is twice differentiable, and that $u(0)$ is sufficiently smooth. Then performing the second order TDBC correction \blue \rm{(}i.e.~Strang splitting based on \eqref{eq:pf-f}, \eqref{eq:pf-A-mod}, and \eqref{eq:pf-Av}\rm{)} \black results in a numerical scheme that is second order convergent, i.e.
	\[ \Vert u_{n}-u(t_{n}) \Vert\leq C\tau^{2}(1+\left|\log\tau\right|) \]
	for all $t_n=n\tau\leq T$ with $C$ independent of $\tau$ and $n$.
\end{theorem}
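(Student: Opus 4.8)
The plan is to convert the local error bound from Theorem~\ref{thm:consistency} into a global one via a Lady\v{z}enskaja--type telescoping argument, exploiting the parabolic smoothing of the analytic semigroup to control the unbounded part $\tau^3 A_0 \overline{E}$ of the local error. First I would write the standard error recursion: setting $e_n = u_n - u(t_n)$, one has $e_{n+1} = S_\tau(u_n) - S_\tau(u(t_n)) + \big(S_\tau(u(t_n)) - u(t_{n+1})\big)$, so that $e_{n+1} = L_\tau e_n + d_{n+1}$, where $L_\tau$ is (a linearization of) the Strang propagator and $d_{n+1}$ is the local defect. By Theorem~\ref{thm:consistency}, $d_{n+1} = \tau^3 A_0 \overline{E}_{n+1} + \mathcal{O}(\tau^3)$ with $\|\overline{E}_{n+1}\| \le C$ and $\|A_0 \overline{E}_{n+1}\| \le C/\tau$. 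Solving the recursion gives $e_n = \sum_{j=1}^{n} \big(\prod_{k=j}^{n-1} L_\tau\big) d_j$ (with $e_0 = 0$), and the task is to bound this sum by $C\tau^2(1 + |\log\tau|)$.

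Next I would split each defect as $d_j = \tau^3 A_0 \overline{E}_j + r_j$ with $\|r_j\| \le C\tau^3$. The $r_j$ part contributes $\sum_j \|(\prod L_\tau) r_j\| \le C \cdot n \cdot \tau^3 \le C T \tau^2$ once we know the discrete evolution operators $\prod_{k=j}^{n-1} L_\tau$ are uniformly bounded (stability). For the critical term $\tau^3 A_0 \overline{E}_j$, the point is that $A_0 \overline{E}_j$ is \emph{not} small, but it is hit by the composition of propagators which behaves like $\e^{(n-j)\tau A_0}$; by the analytic smoothing estimate $\|\e^{tA_0}A_0\| \le C/t$ (equivalently, as used in the excerpt, $\|\varphi_k(\tau A_0) A_0\| \le C/\tau$ and the semigroup bound), one gets, for $j < n$, a factor of order $1/((n-j)\tau)$. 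Hence this part of the sum is controlled by
\[
\tau^3 \sum_{j=1}^{n-1} \frac{C}{(n-j)\tau}\,\|\overline{E}_j\|
\le C\tau^2 \sum_{m=1}^{n-1} \frac{1}{m}
\le C\tau^2 (1 + \log n)
\le C\tau^2 (1 + |\log\tau|),
\]
using $n \le T/\tau$. The last summand $j=n$ (where there is no smoothing factor) is handled directly using $\|\tau^3 A_0 \overline{E}_n\| \le \tau^3 \cdot C/\tau = C\tau^2$, which is even of the right order by itself.

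The remaining ingredient I would supply carefully is \textbf{stability}: that the product $\prod_{k=j}^{n-1} L_\tau$ of the (linearized) Strang operators is bounded uniformly in $n$, $j$, and $\tau$, and moreover composes with $A_0$ in the smoothing fashion claimed above. Here $L_\tau$ is essentially $\e^{\frac{\tau}{2}f'}\,\e^{\tau A_0}\,\e^{\frac{\tau}{2}f'}$ (up to the time-dependent boundary modification, which only adds a bounded inhomogeneity and does not affect the homogeneous propagator $\e^{\tau A_0}$), so uniform boundedness follows from the contractivity/boundedness of the analytic semigroup $\e^{tA_0}$ together with the Lipschitz bound on $f'$ along the solution, via a discrete Gronwall argument; and the smoothing estimate $\|(\prod_{k=j}^{n-1}L_\tau)A_0\| \le C/((n-j)\tau)$ follows by pulling the $A_0$ through the bounded $f'$-factors and using $\|\e^{(n-j)\tau A_0}A_0\|\le C/((n-j)\tau)$. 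I expect this stability-with-smoothing step to be the main obstacle, both because one must justify moving $A_0$ past the nonlinear factors (which requires that $f$ maps into the domain of $A_0$, or an appropriate commutator estimate in the smooth-data framework adopted here) and because the time-dependent boundary correction $\overline{V}$ must be shown not to spoil the homogeneous stability estimate — this is exactly the subtraction $v - \rho$ trick already used before \eqref{eq:evol-A-corr}, which I would invoke to reduce everything to the homogeneous semigroup $\e^{tA_0}$ plus bounded, explicitly known source terms. Once stability and the smoothing estimate are in hand, assembling the three pieces ($r_j$ sum, smoothed $A_0\overline{E}_j$ sum giving the $|\log\tau|$, and the endpoint term) yields the stated bound.
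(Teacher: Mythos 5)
Your overall skeleton (Lady Windermere telescoping, splitting the defect into $\tau^3 A_0\overline{E}_j$ plus an $\mathcal{O}(\tau^3)$ remainder, the harmonic sum giving the $|\log\tau|$, the separate treatment of the endpoint $j=n$, and the $\beta$/$v-\rho$ device to reduce the inhomogeneous boundary flow to $\e^{\tau A_0}$) matches the paper. But there is a genuine gap at exactly the point you flag as the ``main obstacle'': your argument needs the stability-with-smoothing estimate $\Vert(\prod_{k=j}^{n-1}L_\tau)A_0\Vert\le C/((n-j)\tau)$ for products of \emph{linearized Strang operators}, and the justification you sketch --- pulling $A_0$ through the $f^\prime$-factors --- does not work in this setting. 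The whole point of the boundary-correction problem is that $f(u)$ and $f^\prime(u)w$ do \emph{not} satisfy homogeneous Dirichlet conditions (indeed $f(b)\neq 0$ in general), so $f$ does not map into the domain of $A_0$ and $A_0$ cannot be commuted past the nonlinear factors; smoothing applied only stepwise ($\Vert\e^{\tau A_0}A_0\Vert\le C/\tau$) would give $\tau^3\cdot n\cdot C/\tau = CT\tau$, i.e.\ only first order. So as written the critical estimate is asserted rather than proved, and the suggested route to it fails.

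The paper avoids this difficulty by never propagating the defects with the full (linearized) Strang map. Since the modified linear flow \eqref{eq:pf-A-mod}, \eqref{eq:pf-Av} is affine, $\varphi_\tau^A(z)=\e^{\tau A_0}(z-\beta)+\beta$ with $A\beta=0$, $\beta\pO=\overline V$, the difference of two Strang steps collapses to $\e^{\tau A_0}$ applied to the difference of the $f$-half-flows (equation \eqref{eq:linearity}); writing $\varphi^f_{\tau/2}(z)=z+\tau H(z)$ with $H$ Lipschitz then yields the recursion \eqref{eq:error-recursion}, $e_{n+1}=\e^{\tau A_0}e_n+\tau E_n+d_{n+1}$ with $\Vert E_n\Vert\le C\Vert e_n\Vert$. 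Consequently the unbounded part $\tau^3A_0\overline E$ of the defect is only ever hit by \emph{pure} semigroup powers $\e^{(n-k)\tau A_0}$, where $\Vert\e^{tA_0}A_0\Vert\le C/t$ applies directly, while all nonlinear contributions are absorbed into the $\tau E_k$ terms and handled by a discrete Gronwall inequality. To repair your proof you would either have to reorganize the recursion in this way (so no commutation of $A_0$ with nonlinear factors is ever needed) or prove the product smoothing estimate by a discrete variation-of-constants argument, which amounts to the same reorganization.
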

\begin{proof}
First we define the global error
\[ e_{n}=u_{n}-u(t_{n}) \]
and cast it into the following form
\begin{align*}
	e_{n+1} & =S_{\tau}(u_{n})-S_{\tau}(u(t_{n}))+d_{n+1}\\
							 & =\varphi_{\tau/2}^{f}\circ\varphi_{\tau}^{A}\circ\varphi_{\tau/2}^f(u_{n})
								 -\varphi_{\tau/2}^{f}\circ\varphi_{\tau}^{A}\circ\varphi_{\tau/2}^{f}(u(t_{n}))
								 +d_{n+1},
\end{align*}
where \blue $\varphi_\tau^A(v_0)$ denotes the exact solution of \eqref{eq:pf-A-mod}, \eqref{eq:pf-Av}. The defect $d_{n+1}$ \black is given by
\[ d_{n+1} = S_{\tau}(u(t_n))-u(t_n+\tau). \]

Now, let us introduce $\beta$ such that \blue $\beta\pO=\overline V$ as defined in \eqref{eq:pf-Av} \black and $A\beta=0$. Then we can write
\[ \varphi_{\tau}^A(z) = \e^{tA_0}(z-\beta) + \beta \]
and consequently
\begin{equation} \varphi_{\tau}^{A}\circ\varphi_{\tau/2}^f(u_{n}) - \varphi_{\tau}^{A}\circ\varphi_{\tau/2}^{f}(u(t_{n}))
= \mathrm{e}^{\tau A_0}D(u_n,u(t_n)), \label{eq:linearity} \end{equation}
where
\[ D(u_n,u(t_n)) = \varphi_{\tau/2}^f(u_{n})-\varphi_{\tau/2}^{f}(u(t_{n})). \]
Since $\varphi^f_{\blue\tau/2\black}(z) = z + \tau H(z)$ \blue holds for some Lipschitz continuous function $H$\black, equation (\ref{eq:linearity}) yields
\begin{equation} e_{n+1} = \e^{\tau A_0}D(u_n,u(t_n)) + \tau H(\varphi_{\tau}^{A}\circ\varphi_{\tau/2}^f(u_{n}))-\tau H(\varphi_{\tau}^{A}\circ\varphi_{\tau/2}^{f}(u(t_{n}))) + d_{n+1}. \label{eq:error1} \end{equation}

Now, we use
\[ D(u_n,u(t_n)) = e_n + \tau (H(u_n)-H(u(t_n)) \]
which substituted into equation (\ref{eq:error1}) yields
\begin{equation} e_{n+1} = \e^{\tau A_0}e_n + \tau E_{n} + d_{n+1}. \label{eq:error-recursion} \end{equation}
	Note that we can bound $E_{n}$ as follows $\Vert E_{n}\Vert \leq C \Vert e_{n} \Vert$.

Solving the linear part of the recurrence relation (\ref{eq:error-recursion}) we get
\[
	e_{n}=\e^{n\tau A_0}e_{0}+\sum_{k=1}^{n}\e^{(n-k)\tau A_0}d_{k}+\tau\sum_{k=0}^{n-1}\e^{(n-k-1)\tau A_0}E_{k}.
\]
Now, in the light of Theorem \ref{thm:consistency}, the terms $d_k$ are \blue of the form $d_k = \tau^3 A_0 \overline E + \mathcal{O}(\tau^3)$. This, in general\black, is not sufficient to obtain convergence of order two. However, by the parabolic smoothing property (which is a consequence of the assumption that $A_0$ generates an analytic semigroup)
\[
	\Vert e^{tA_0}A_0 \Vert\leq \blue \frac{C}{t},\quad 0<t\le T\black
\]
we get
\[
\Vert e_{n}\Vert\leq C\Vert e_{0}\Vert+C\tau^{3}\sum_{k=1}^{n-1}\frac{1}{k\tau}+C\tau\sum_{k=0}^{n-1}\Vert e_{k}\Vert.
\]
Then applying Gronwall's inequality (with $\Vert e_{0}\Vert=0$) yields
\[
\Vert e_{n}\Vert\leq C\tau^{2}(1+\left|\log\tau\right|),
\]
which is the desired result.
\end{proof}

A corollary of the above calculation is that the unmodified Strang splitting (i.e.~without performing any correction) is of order one if a reaction $f$ with $f(b)\neq b$ is used. If $f(b)=b$ then we get locally order two and globally (assuming the parabolic smoothing property) order two. Finally, if $f(b)=b$ and we have $Af(u)\pO=b$ for all $u$ with $u\pO=b$, then we get locally order three and globally order two (even without the parabolic smoothing property).

\section{Numerical results \label{sec:numerics}}

We now turn to a number of numerical examples that (in addition to confirming the theoretical results obtained) are used to investigate the relative accuracy of the TDBC and CEC correction approach. In addition, our goal is to investigate under what circumstances performing the third order correction increases the accuracy compared to the second order correction. \blue Although our theoretical convergence results are valid in any dimension, we use the interval $[0,1]$ as the computational domain in all simulations. This is sufficient since the observed order reduction does not depend on the dimension of the problem. Further, we restrict ourselves to time-invariant boundary conditions, since time dependent boundary data will not behave in a different way (see also \cite{einkemmer2015}).  Note that \black all numerical approximations are compared to a reference solution obtained by specifying a tolerance of $10^{-14}$ for a traditional (i.e.~unsplit) time integrator. Note that all errors reported are measured in the maximum norm.

\subsection{Parabolic problem}

We consider the parabolic diffusion-reaction equation
\begin{equation} \partial_t u(t,x) = \partial_{xx}u(t,x) + f(u(t,x),x), \qquad u\pO=b, \qquad u(0,x)=u_0(x). \label{eq:diffusion} \end{equation}
	The spatial derivative is discretized using standard centered finite differences.

	As the first experiment we use $b=0$ and employ three different reaction terms of the form $f(u,x)=u+1$, $f(u,x)=u+p(x)$, and $f(u,x)=u+q(x)$ with $p(x)=x(1-x)$ and $q(x)=u + (x-1)x(-1 - x + x^2)$. These reaction terms are constructed such that for the unmodified splitting (i.e.~without performing any correction), according to the analysis given in section \ref{sec:convergence} and \cite{einkemmer2015}, we expect a local error of first, second, and third order and a global error of first, second, and second order, respectively. The numerical results shown in Table \ref{tab:diff-different-f} confirm this behavior.
\begin{table}[htb]
	\begin{center}
	\textbf{Local error}
	\vspace{0.1cm}

	\begin{tabular}{rrrrrrrrr}
		& \multicolumn{2}{c}{$f(u)=u+1$} && \multicolumn{2}{c}{$f(u)=u+p(x)$} && \multicolumn{2}{c}{$f(u)=u+q(x)$} \\
		\cline{2-3} \cline{5-6} \cline{8-9}
		\multicolumn{1}{l}{step size} & \multicolumn{1}{l}{$l^{\infty}$ error} & \multicolumn{1}{l}{order} & &
		\multicolumn{1}{l}{$l^{\infty}$ error} & \multicolumn{1}{l}{order} & & \multicolumn{1}{l}{$l^{\infty}$ error} & \multicolumn{1}{l}{order} \\
		\hline
		6.40e-02    & 3.14e-02    & --     &&  4.08e-04    & --   && 4.54e-04    & --      \\
		3.20e-02    & 1.54e-02    & 1.03  &&  9.93e-05    & 2.04  && 6.13e-05    & 2.89  \\
		1.60e-02    & 7.51e-03    & 1.03  &&  2.48e-05    & 2.00  && 7.72e-06    & 2.99  \\
		8.00e-03    & 3.64e-03    & 1.04  &&  6.21e-06    & 2.00  && 9.69e-07    & 2.99  \\
		4.00e-03    & 1.75e-03    & 1.06  &&  1.55e-06    & 2.00  && 1.22e-07    & 2.99  \\
		2.00e-03    & 8.24e-04    & 1.08  &&  3.88e-07    & 2.00  && 1.54e-08    & 2.99  \\
	\end{tabular}
	\vspace{0.25cm}

	\textbf{Global error}
	\vspace{0.1cm}

	\begin{tabular}{rrrrrrrrr}
		& \multicolumn{2}{c}{$f(u)=u+1$} && \multicolumn{2}{c}{$f(u)=u+p(x)$}  && \multicolumn{2}{c}{$f(u)=u+q(x)$} \\
		\cline{2-3} \cline{5-6} \cline{8-9}
		\multicolumn{1}{l}{step size} & \multicolumn{1}{l}{$l^{\infty}$ error} & \multicolumn{1}{l}{order} & &
		\multicolumn{1}{l}{$l^{\infty}$ error} & \multicolumn{1}{l}{order} && \multicolumn{1}{l}{$l^{\infty}$ error} & \multicolumn{1}{l}{order} \\
		\hline
		6.40e-02    & 3.15e-02    & --    &&  6.75e-04    & --  &&  9.66e-04  & --     \\
		3.20e-02    & 1.54e-02    & 1.03 &&  1.71e-04    & 1.98 &&  2.41e-04  & 2.00  \\
		1.60e-02    & 7.52e-03    & 1.03 &&  4.34e-05    & 1.98 &&  6.01e-05  & 2.00  \\
		8.00e-03    & 3.65e-03    & 1.04 &&  1.09e-05    & 1.99 &&  1.50e-05  & 2.00  \\
		4.00e-03    & 1.75e-03    & 1.06 &&  2.75e-06    & 1.99 &&  3.76e-06  & 2.00  \\
		2.00e-03    & 8.29e-04    & 1.08 &&  6.91e-07    & 1.99 &&  9.40e-07  & 2.00  \\
	\end{tabular}
	\end{center}

	\caption{The local (at $t=0$) and global errors using the unmodified Strang splitting applied to equation (\ref{eq:diffusion}) are shown. The three different reaction terms indicated in the text are used. The space discretization is conducted by using the standard centered finite difference stencil with $200$ grid points. All problems are integrated until $t=0.25$ and use the initial value $u(0,x)=0$.\label{tab:diff-different-f}}
\end{table}

Now, let us investigate the two corrections which yield a second order accurate splitting scheme. In this case we set $b=1$ and $f(u)=\e^{u-1}$. The numerical results are shown in Table \ref{tab:diffusion-2nd} and confirm the convergence analysis conducted in section \ref{sec:convergence} and \cite{einkemmer2015}. The error of both corrected splittings (CEC and TDBC), even for medium precision requirements, is superior by almost three orders of magnitude compared to the unmodified Strang splitting. We also observe that in this case the error of the TDBC approach is smaller by approximately 40\% compared to the CEC approach.

\begin{table}[htb]
	\begin{center}
	\textbf{Local error}
	\vspace{0.1cm}

	\begin{tabular}{rrrrrrrrr}
		& \multicolumn{2}{c}{unmodified} && \multicolumn{2}{c}{TDBC} && \multicolumn{2}{c}{CEC} \\
		\cline{2-3} \cline{5-6} \cline{8-9}
		\multicolumn{1}{l}{step size} & \multicolumn{1}{l}{$l^{\infty}$ error} & \multicolumn{1}{l}{order} & &
		\multicolumn{1}{l}{$l^{\infty}$ error} & \multicolumn{1}{l}{order} & & \multicolumn{1}{l}{$l^{\infty}$ error} & \multicolumn{1}{l}{order} \\
		\hline
		1.60e-02    & 7.49e-03    & --     && 1.25e-04    & --     && 1.06e-04    & --      \\
		8.00e-03    & 3.64e-03    & 1.04   && 3.25e-05    & 1.94   && 2.76e-05    & 1.94    \\
		4.00e-03    & 1.75e-03    & 1.06   && 8.17e-06    & 1.99   && 6.91e-06    & 2.00    \\
		2.00e-03    & 8.24e-04    & 1.08   && 2.04e-06    & 2.00   && 1.73e-06    & 2.00    \\
		1.00e-03    & 3.79e-04    & 1.12   && 5.13e-07    & 2.00   && 4.31e-07    & 2.00    \\
		5.00e-04    & 1.68e-04    & 1.18   && 1.27e-07    & 2.01   && 1.07e-07    & 2.00    \\
	\end{tabular}
	\vspace{0.25cm}

	\textbf{Global error}
	\vspace{0.1cm}

	\begin{tabular}{rrrrrrrrr}
		& \multicolumn{2}{c}{unmodified} && \multicolumn{2}{c}{TDBC}  && \multicolumn{2}{c}{CEC} \\
		\cline{2-3} \cline{5-6} \cline{8-9}
		\multicolumn{1}{l}{step size} & \multicolumn{1}{l}{$l^{\infty}$ error} & \multicolumn{1}{l}{order} & &
		\multicolumn{1}{l}{$l^{\infty}$ error} & \multicolumn{1}{l}{order} && \multicolumn{1}{l}{$l^{\infty}$ error} & \multicolumn{1}{l}{order} \\
		\hline
		1.60e-02    & 7.52e-03    & --     && 3.13e-05    & --     && 4.15e-05    & --    \\
		8.00e-03    & 3.65e-03    & 1.04   && 7.72e-06    & 2.02   && 1.04e-05    & 2.00  \\
		4.00e-03    & 1.75e-03    & 1.06   && 1.91e-06    & 2.02   && 2.60e-06    & 2.00  \\
		2.00e-03    & 8.29e-04    & 1.08   && 4.69e-07    & 2.02   && 6.49e-07    & 2.00  \\
		1.00e-03    & 3.82e-04    & 1.12   && 1.15e-07    & 2.03   && 1.62e-07    & 2.00  \\
		5.00e-04    & 1.70e-04    & 1.17   && 2.81e-08    & 2.03   && 4.06e-08    & 2.00  \\
	\end{tabular}
	\end{center}

	\caption{The local (at $t=0$) and global errors for the unmodified Strang splitting as well as the second order TDBC and CEC corrected Strang splitting applied to equation (\ref{eq:diffusion}) with $f(u)=\e^{u-1}$ are shown. The space discretization is conducted by using the standard centered finite difference stencil with $200$ grid points. All problems are integrated until $t=0.25$ and use the initial value $u(0,x)=\sin \pi x$.
 \label{tab:diffusion-2nd}}
\end{table}

Now let us consider the third order correction. As has been outlined in section \ref{sec:corrections}, the resulting numerical scheme is locally third order accurate but requires numerical differentiation in order to compute the first derivative of $u$ necessary for the correction. The numerical results are shown in Table \ref{tab:diffusion-3rd}. We observe that it is \textit{not} advantageous to employ this correction as the overall global error is slightly larger compared to the second order correction. This is true for both the TDBC and the CEC correction.

\begin{table}[htb]
	\begin{center}
	\textbf{Local error}
	\vspace{0.1cm}

	\begin{tabular}{rrrrrrrrr}
		& \multicolumn{2}{c}{unmodified} && \multicolumn{2}{c}{TDBC} && \multicolumn{2}{c}{CEC} \\
		\cline{2-3} \cline{5-6} \cline{8-9}
		\multicolumn{1}{l}{step size} & \multicolumn{1}{l}{$l^{\infty}$ error} & \multicolumn{1}{l}{order} & &
		\multicolumn{1}{l}{$l^{\infty}$ error} & \multicolumn{1}{l}{order} & & \multicolumn{1}{l}{$l^{\infty}$ error} & \multicolumn{1}{l}{order} \\
		\hline
		1.60e-02    & 7.49e-03    & --     && 1.71e-04    & --    && 8.81e-05    & --      \\
		8.00e-03    & 3.64e-03    & 1.04   && 1.86e-05    & 3.2   && 1.44e-05    & 2.61    \\
		4.00e-03    & 1.75e-03    & 1.06   && 2.29e-06    & 3.02  && 2.11e-06    & 2.77    \\
		2.00e-03    & 8.24e-04    & 1.08   && 3.11e-07    & 2.88  && 2.87e-07    & 2.88    \\
		1.00e-03    & 3.79e-04    & 1.12   && 4.06e-08    & 2.94  && 3.75e-08    & 2.94    \\
		5.00e-04    & 1.68e-04    & 1.18   && 5.18e-09    & 2.97  && 4.80e-09    & 2.97    \\
	\end{tabular}
	\vspace{0.25cm}

	\textbf{Global error}
	\vspace{0.1cm}

	\begin{tabular}{rrrrrrrrr}
		& \multicolumn{2}{c}{unmodified} && \multicolumn{2}{c}{TDBC}  && \multicolumn{2}{c}{CEC} \\
		\cline{2-3} \cline{5-6} \cline{8-9}
		\multicolumn{1}{l}{step size} & \multicolumn{1}{l}{$l^{\infty}$ error} & \multicolumn{1}{l}{order} & &
		\multicolumn{1}{l}{$l^{\infty}$ error} & \multicolumn{1}{l}{order} && \multicolumn{1}{l}{$l^{\infty}$ error} & \multicolumn{1}{l}{order} \\
		\hline
		1.60e-02    & 7.52e-03    & --    && 2.32e-04    & --    && 6.85e-05    & --       \\
		8.00e-03    & 3.65e-03    & 1.04  && 3.30e-05    & 2.81  && 1.67e-05    & 2.04     \\
		4.00e-03    & 1.75e-03    & 1.06  && 5.89e-06    & 2.49  && 4.11e-06    & 2.02     \\
		2.00e-03    & 8.29e-04    & 1.08  && 1.22e-06    & 2.27  && 1.02e-06    & 2.01     \\
		1.00e-03    & 3.82e-04    & 1.12  && 2.77e-07    & 2.14  && 2.54e-07    & 2.00     \\
		5.00e-04    & 1.70e-04    & 1.17  && 6.59e-08    & 2.07  && 6.34e-08    & 2.00     \\
	\end{tabular}
	\end{center}

	\caption{The local (at $t=0$) and global errors for the unmodified Strang splitting as well as the third order TDBC and CEC corrected Strang splitting applied to equation (\ref{eq:diffusion}) with $f(u)=\e^{u-1}$ are shown. The space discretization is conducted by using the standard centered finite difference stencil with $200$ grid points. All problems are integrated until $t=0.25$ and use the initial value $u(0,x)=\sin \pi x$.
 \label{tab:diffusion-3rd}}
\end{table}

\subsection{Hyperbolic problems}

First, we consider the simple advection-reaction equation
\begin{equation} \partial_t u(t,x) = \partial_x u(t,x) + f(u(t,x),x), \qquad u(t,0)=b, \qquad u(0,x)=u_0(x). \label{eq:advection} \end{equation}
	Since $\partial_x$ and $u\mapsto f(u)$ commute, the error is only due to the boundary (assuming that no error is made in the integration of the two partial flows). Note that in this case we only prescribe boundary data at the \blue inflow boundary which is the \black left endpoint of the domain. We choose $b=0$ and employ three different reaction terms that according to the convergence analysis conducted should give a local error of order one, two, and three, respectively. The situation with respect to the global error is more complex as in the present no parabolic smoothing is available. The only fact we can therefore reduce from theory is that the global convergence order should not be below zero, one, and two, respectively (and obviously can not exceed two). The corresponding numerical results are shown in Table \ref{tab:advection-threef}. The results for the local error agree very well with the theoretical prediction. For the global error we observe order one, two, and two, respectively. That is, even though there is no parabolic smoothing the order is in fact identical to what we would expect based on the convergence analysis conducted for the parabolic case. We postpone the explanation of this behavior \blue towards the \black end of this section, \blue where we discuss a slightly more general problem.\black

\begin{table}[htb]
	\begin{center}
	\textbf{Local error}
	\vspace{0.1cm}

	\begin{tabular}{rrrrrrrrr}
		& \multicolumn{2}{c}{$f(u)=u+1$} && \multicolumn{2}{c}{$f(u)=u+x$} && \multicolumn{2}{c}{$f(u)=u+x^2$} \\
		\cline{2-3} \cline{5-6} \cline{8-9}
		\multicolumn{1}{l}{step size} & \multicolumn{1}{l}{$l^{\infty}$ error} & \multicolumn{1}{l}{order} & &
		\multicolumn{1}{l}{$l^{\infty}$ error} & \multicolumn{1}{l}{order} & & \multicolumn{1}{l}{$l^{\infty}$ error} & \multicolumn{1}{l}{order} \\
		\hline
		2.40e-01    & 1.26e-01    & --      & & 7.70e-03    & --    & & 2.41e-03    & --      \\
		1.20e-01    & 6.08e-02    & 1.05    & & 1.84e-03    & 2.07  & & 2.94e-04    & 3.04    \\
		6.00e-02    & 2.94e-02    & 1.05    & & 4.44e-04    & 2.05  & & 3.63e-05    & 3.02    \\
		3.00e-02    & 1.41e-02    & 1.06    & & 1.06e-04    & 2.06  & & 4.51e-06    & 3.01    \\
		1.50e-02    & 6.53e-03    & 1.11    & & 2.47e-05    & 2.10  & & 5.62e-07    & 3.00    \\
		7.50e-03    & 2.76e-03    & 1.24    & & 5.45e-06    & 2.18  & & 6.98e-08    & 3.01    \\
	\end{tabular}
	\vspace{0.25cm}

	\textbf{Global error}
	\vspace{0.1cm}

	\begin{tabular}{rrrrrrrrr}
		& \multicolumn{2}{c}{$f(u)=u+1$} && \multicolumn{2}{c}{$f(u)=u+x$}  && \multicolumn{2}{c}{$f(u)=u+x^2$} \\
		\cline{2-3} \cline{5-6} \cline{8-9}
		\multicolumn{1}{l}{step size} & \multicolumn{1}{l}{$l^{\infty}$ error} & \multicolumn{1}{l}{order} & &
		\multicolumn{1}{l}{$l^{\infty}$ error} & \multicolumn{1}{l}{order} && \multicolumn{1}{l}{$l^{\infty}$ error} & \multicolumn{1}{l}{order} \\
		\hline
		2.40e-01    & 1.25e-01    & --       & & 1.11e-02    & --   && 1.14e-02    & --       \\
		1.20e-01    & 5.98e-02    & 1.07     & & 2.16e-03    & 2.36 && 2.92e-03    & 1.96     \\
		6.00e-02    & 2.85e-02    & 1.07     & & 4.44e-04    & 2.28 && 7.32e-04    & 1.99     \\
		3.00e-02    & 1.31e-02    & 1.12     & & 1.06e-04    & 2.06 && 1.83e-04    & 2.00     \\
		1.50e-02    & 5.54e-03    & 1.24     & & 2.55e-05    & 2.06 && 4.56e-05    & 2.00     \\
		7.50e-03    & 1.94e-03    & 1.51     & & 6.37e-06    & 2.00 && 1.14e-05    & 2.00     \\
	\end{tabular}
	\end{center}

	\caption{The local (at $t=0$) and global errors using the unmodified Strang splitting applied to equation (\ref{eq:advection}) are shown for the three different reaction terms indicated in the table. The space discretization is conducted by using a second order upwind finite difference stencil with $10^3$ grid points. All problems are integrated until $t=1.9$ and use the initial value $u(0,x)=0$.	
  \label{tab:advection-threef}}
\end{table}

Now, let us turn our attention to the two corrections. In order to provide a more realistic example, we will use the following equation
\begin{equation} \partial_t u(t,x) = \partial_x(a(x)u(t,x)) + f(u(t,x)), \qquad u(t,0)=1, \qquad u(0,x)=u_0(x), \label{eq:conservation-law} \end{equation}
	where, if not indicated otherwise, we use $a(x)=1+\sin x$.
	Note that in this case the two operators do not commute and thus we will observe error propagation in the interior of the domain. The numerical results in Table \ref{tab:advection-2nd} compare the unmodified Strang splitting with the TDBC and the CEC corrected Strang splitting. For both corrected versions the error, even for low precision requirements, is almost an order of magnitude smaller than for the uncorrected case. In addition, we observe that the error for the CEC correction is smaller by at least a factor of $6$ compared to the TDBC correction. Thus, in this particular example the CEC method has a significant advantage.

\begin{table}[htb]
	\begin{center}
	\textbf{Local error}
	\vspace{0.1cm}

	\begin{tabular}{rrrrrrrrr}
		& \multicolumn{2}{c}{unmodified} && \multicolumn{2}{c}{TDBC} && \multicolumn{2}{c}{CEC} \\
		\cline{2-3} \cline{5-6} \cline{8-9}
		\multicolumn{1}{l}{step size} & \multicolumn{1}{l}{$l^{\infty}$ error} & \multicolumn{1}{l}{order} & &
		\multicolumn{1}{l}{$l^{\infty}$ error} & \multicolumn{1}{l}{order} & & \multicolumn{1}{l}{$l^{\infty}$ error} & \multicolumn{1}{l}{order} \\
		\hline
		2.40e-01    & 1.25e-01    & --     && 1.51e-02    & --    && 8.80e-03    & --       \\
		1.20e-01    & 5.98e-02    & 1.07   && 2.14e-03    & 2.82  && 1.93e-03    & 2.19     \\
		6.00e-02    & 2.84e-02    & 1.07   && 4.73e-04    & 2.18  && 4.42e-04    & 2.13     \\
		3.00e-02    & 1.31e-02    & 1.12   && 1.15e-04    & 2.04  && 1.01e-04    & 2.13     \\
		1.50e-02    & 5.53e-03    & 1.24   && 2.84e-05    & 2.02  && 2.20e-05    & 2.19     \\
		7.50e-03    & 1.89e-03    & 1.55   && 6.91e-06    & 2.04  && 4.68e-06    & 2.24     \\
	\end{tabular}
	\vspace{0.25cm}

	\textbf{Global error}
	\vspace{0.1cm}

	\begin{tabular}{rrrrrrrrr}
		& \multicolumn{2}{c}{unmodified} && \multicolumn{2}{c}{TDBC}  && \multicolumn{2}{c}{CEC} \\
		\cline{2-3} \cline{5-6} \cline{8-9}
		\multicolumn{1}{l}{step size} & \multicolumn{1}{l}{$l^{\infty}$ error} & \multicolumn{1}{l}{order} & &
		\multicolumn{1}{l}{$l^{\infty}$ error} & \multicolumn{1}{l}{order} && \multicolumn{1}{l}{$l^{\infty}$ error} & \multicolumn{1}{l}{order} \\
		\hline
		2.40e-01    & 3.73e-01    & --      && 1.09e-01    & --   && 2.59e-02    & --     \\
		1.20e-01    & 9.07e-02    & 2.04    && 3.56e-02    & 1.62 && 4.72e-03    & 2.46   \\
		6.00e-02    & 2.84e-02    & 1.67    && 1.02e-02    & 1.80  && 1.30e-03    & 1.86   \\
		3.00e-02    & 1.31e-02    & 1.12    && 2.74e-03    & 1.90  && 4.15e-04    & 1.65   \\
		1.50e-02    & 5.54e-03    & 1.24    && 7.07e-04    & 1.95 && 1.16e-04    & 1.83   \\
		7.50e-03    & 1.94e-03    & 1.51    && 1.80e-04    & 1.98 && 3.08e-05    & 1.92   \\
	\end{tabular}
	\end{center}

	\caption{The local (at $t=0$) and global errors for the unmodified Strang splitting as well as the second order TDBC and CEC corrected Strang splitting applied to equation (\ref{eq:conservation-law}) with $f(u)=\e^{u-1}$ are shown. The space discretization is conducted by using a second order upwind finite difference stencil with $500$ grid points. All problems are integrated until $t=1.9$ and use the initial value $u(0,x)=1+x$.
		 \label{tab:advection-2nd}}
\end{table}

Now, we perform an identical numerical experiment except for the fact that we use the third order TDBC and CEC corrections. The numerical results are shown in Table \ref{tab:advection-3rd}. If we compare these results with the second order corrections in Table \ref{tab:advection-2nd} we find that for the TDBC approach the third order correction reduces the error by approximately 50\%, while for the CEC correction the error increases by approximately 20\%. Nevertheless, the second order CEC correction is still the most accurate method overall. This is despite the fact that in this example no numerical differentiation is required in order to evaluate the correction.

\begin{table}[htb]
	\begin{center}
	\textbf{Local error}
	\vspace{0.1cm}

	\begin{tabular}{rrrrrrrrr}
		& \multicolumn{2}{c}{unmodified} && \multicolumn{2}{c}{TDBC} && \multicolumn{2}{c}{CEC} \\
		\cline{2-3} \cline{5-6} \cline{8-9}
		\multicolumn{1}{l}{step size} & \multicolumn{1}{l}{$l^{\infty}$ error} & \multicolumn{1}{l}{order} & &
		\multicolumn{1}{l}{$l^{\infty}$ error} & \multicolumn{1}{l}{order} & & \multicolumn{1}{l}{$l^{\infty}$ error} & \multicolumn{1}{l}{order} \\
		\hline
		2.40e-01    & 1.25e-01    & --     && 1.51e-02    & --     && 1.39e-02    & --    \\
		1.20e-01    & 5.98e-02    & 1.07   && 2.14e-03    & 2.82   && 1.61e-03    & 3.11  \\
		6.00e-02    & 2.84e-02    & 1.07   && 2.87e-04    & 2.90   && 1.90e-04    & 3.09  \\
		3.00e-02    & 1.31e-02    & 1.12   && 3.72e-05    & 2.95   && 2.28e-05    & 3.06  \\
		1.50e-02    & 5.53e-03    & 1.24   && 4.73e-06    & 2.97   && 2.79e-06    & 3.03  \\
		7.50e-03    & 1.89e-03    & 1.55   && 5.98e-07    & 2.99   && 3.44e-07    & 3.02  \\
	\end{tabular}
	\vspace{0.25cm}

	\textbf{Global error}
	\vspace{0.1cm}

	\begin{tabular}{rrrrrrrrr}
		& \multicolumn{2}{c}{unmodified} && \multicolumn{2}{c}{TDBC} && \multicolumn{2}{c}{CEC} \\
		\cline{2-3} \cline{5-6} \cline{8-9}
		\multicolumn{1}{l}{step size} & \multicolumn{1}{l}{$l^{\infty}$ error} & \multicolumn{1}{l}{order} & &
		\multicolumn{1}{l}{$l^{\infty}$ error} & \multicolumn{1}{l}{order} && \multicolumn{1}{l}{$l^{\infty}$ error} & \multicolumn{1}{l}{order} \\
		\hline
		2.40e-01    & 3.73e-01    & --     && 5.46e-02    & --    && 4.65e-02    & --     \\
		1.20e-01    & 9.07e-02    & 2.04   && 2.04e-02    & 1.42  && 1.08e-02    & 2.10    \\
		6.00e-02    & 2.84e-02    & 1.67   && 6.25e-03    & 1.70   && 2.57e-03   & 2.08   \\
		3.00e-02    & 1.31e-02    & 1.12   && 1.73e-03    & 1.85  && 6.24e-04    & 2.05   \\
		1.50e-02    & 5.54e-03    & 1.24   && 4.55e-04    & 1.93  && 1.53e-04    & 2.03   \\
		7.50e-03    & 1.94e-03    & 1.51   && 1.17e-04    & 1.96  && 3.79e-05    & 2.01   \\
	\end{tabular}
	\end{center}

	\caption{The local (at $t=0$) and global errors for the unmodified Strang splitting as well as the third order TDBC and CEC corrected Strang splitting applied to equation (\ref{eq:conservation-law}) with $f(u)=\e^{u-1}$ are shown. The space discretization is conducted by using a second order upwind finite difference stencil with $500$ grid points. All problems are integrated until $t=1.9$ and use the initial value $u(0,x)=1+x$.
	\label{tab:advection-3rd}}
\end{table}

\blue We still have to explain \black the fact that, in all simulations conducted so far, we actually observe the same order locally as well as globally. For the convergence analysis conducted in section \ref{sec:convergence} and \cite{einkemmer2015} this requires the parabolic smoothing property which is not applicable to the hyperbolic equation we consider in this section. \blue For a better understanding of the situation, we repeat the experiment of Table \ref{tab:advection-2nd} but measure now the error away from the inflow boundary in the interval $[\tfrac{1}{2},1]$, which is a subset of the computational domain $[0,1]$. The results are reported in Table \ref{tab:advection-propagation}. Note that the global errors of both tables are almost identical. The local errors, however, differ significantly. The results of Table~\ref{tab:advection-propagation} show that even the uncorrected method is locally third order accurate in $[\tfrac{1}{2},1]$. This shows that order reduction of the local error only happens at the inflow boundary. \black The second order error made at this boundary is then propagated by a locally third order accurate scheme and thus no further decrease in the order is observed for the global error.

\begin{table}[htb]
	\begin{center}
	\textbf{Local error}
	\vspace{0.1cm}

	\begin{tabular}{rrrrrrrrr}
		& \multicolumn{2}{c}{unmodified} && \multicolumn{2}{c}{TDBC} && \multicolumn{2}{c}{CEC} \\
		\cline{2-3} \cline{5-6} \cline{8-9}
		\multicolumn{1}{l}{step size} & \multicolumn{1}{l}{$l^{\infty}$ error} & \multicolumn{1}{l}{order} & &
		\multicolumn{1}{l}{$l^{\infty}$ error} & \multicolumn{1}{l}{order} & & \multicolumn{1}{l}{$l^{\infty}$ error} & \multicolumn{1}{l}{order} \\
		\hline
		2.40e-01    & 1.44e-02    & --     && 1.44e-02    & --    && 7.19e-03    & --      \\
		1.20e-01    & 2.05e-03    & 2.81   && 2.05e-03    & 2.81  && 9.33e-04    & 2.95    \\
		6.00e-02    & 2.75e-04    & 2.90   && 2.75e-04    & 2.90  && 1.24e-04    & 2.91    \\
		3.00e-02    & 3.58e-05    & 2.95   && 3.58e-05    & 2.95  && 1.62e-05    & 2.94    \\
		1.50e-02    & 4.56e-06    & 2.97   && 4.56e-06    & 2.97  && 2.08e-06    & 2.96    \\
		7.50e-03    & 5.76e-07    & 2.99   && 5.76e-07    & 2.99  && 2.63e-07    & 2.98    \\
	\end{tabular}
	\vspace{0.25cm}

	\textbf{Global error}
	\vspace{0.1cm}

	\begin{tabular}{rrrrrrrrr}
		& \multicolumn{2}{c}{unmodified} && \multicolumn{2}{c}{TDBC} && \multicolumn{2}{c}{CEC} \\
		\cline{2-3} \cline{5-6} \cline{8-9}
		\multicolumn{1}{l}{step size} & \multicolumn{1}{l}{$l^{\infty}$ error} & \multicolumn{1}{l}{order} & &
		\multicolumn{1}{l}{$l^{\infty}$ error} & \multicolumn{1}{l}{order} && \multicolumn{1}{l}{$l^{\infty}$ error} & \multicolumn{1}{l}{order} \\
		\hline
		2.40e-01    & 3.73e-01    & --    && 1.02e-01    & --    && 2.59e-02    & --     \\
		1.20e-01    & 9.07e-02    & 2.04  && 3.31e-02    & 1.63  && 4.72e-03    & 2.46   \\
		6.00e-02    & 1.26e-02    & 2.85  && 9.46e-03    & 1.81  && 9.71e-04    & 2.28   \\
		3.00e-02    & 1.87e-03    & 2.75  && 2.52e-03    & 1.91  && 3.16e-04    & 1.62   \\
		1.50e-02    & 4.88e-04    & 1.94  && 6.51e-04    & 1.95  && 8.99e-05    & 1.81   \\
		7.50e-03    & 1.25e-04    & 1.97  && 1.65e-04    & 1.98  && 2.39e-05    & 1.91   \\
	\end{tabular}
	\end{center}

	\caption{The local (at $t=0$) and global errors computed in $[\tfrac{1}{2},1]$ for the unmodified Strang splitting as well as the second order TDBC and CEC corrected Strang splitting applied to equation (\ref{eq:conservation-law}) with $f(u)=\e^{u-1}$ are shown. The space discretization is conducted by using a second order upwind finite difference stencil with $500$ grid points. All problems are integrated until $t=1.9$ and use the initial value $u(0,x)=1+x$.
	\label{tab:advection-propagation}}
\end{table}

To conclude this section we perform a more thorough comparison of the accuracy that is achieved by the CEC and the TDBC correction. To that end we show the relative advantage (in accuracy) of the CEC approach in Table~\ref{tab:comparison} for five different reaction terms and five different advection coefficients. In almost all cases the CEC correction is more accurate compared to the TDBC correction. Depending on the problem the increase in accuracy can be more than an order of magnitude. We also observe that for both methods employing the third order correction can yield significant gains for some problems, while significantly diminishing the accuracy for other problems.

\begin{table}[htb]
	\begin{center}
	$t=0.5$
	\vspace{0.1cm}

	\begin{tabular}{r|rrrrr}
		& $a_1$ & $a_2$ & $a_3$ & $a_4$ & $a_5$ \\
		\hline
		$f_1$  & 27.6(6.9,0.8) & 4.5(1.2,0.9) & 27.3(23,1.5) & 14.1(7.2,2.2) & 4.0(1.0,0.7) \\
		$f_2$  & 18.2(1.5,0.9) & 15.4(1.3,1.0) & 14.1(7.2,2.2) & 9.4(0.2,1.0) & 8.2(0.9,0.7) \\
		$f_3$  & 22.7(5.7,0.8) & 4.6(1.3,0.9) & 15.8(13.7,1.5) & 4.0(0.3,1.0) & 4.2(1.1,0.7) \\
		$f_4$  & 5.7(3.1,0.6) & 2.2(1.4,0.7) & 1.5(3.3,0.6) & 1.7(0.4,1.0) & 2.7(1.4,0.6) \\
		$f_5$  & 2.4(1.0,0.7) & 2.4(1.0,0.9) & 2.5(1.0,1.7) & 3.7(1.0,1.0) & 3.4(1.0,0.7)
	\end{tabular}
	\end{center}
	
	\begin{center}
	$t=2$
	\vspace{0.1cm}

	\begin{tabular}{r|rrrrr}
		& $a_1$ & $a_2$ & $a_3$ & $a_4$ & $a_5$ \\
		\hline
		$f_1$  & 21.6(2.9,0.6) & 5.7(0.8,0.4) & 35.3(24.7,1.3) & 2.6(0.8,0.5) & 3.9(1.9,0.4) \\
		$f_2$  & 11.1(1.4,0.5) & 19.3(3.8,0.3) & 15.5(7.1,2.0) & 0.9(0.9,0.2) & 6.0(1.6,0.5) \\
		$f_3$  & 18.6(2.6,0.6) & 5.7(0.8,0.4) & 19.3(13.9,1.3) & 2.5(1.1,0.4) & 3.8(1.9,0.4) \\
		$f_4$  & 6.8(1.4,0.6) & 8.8(1.8,0.5) & 1.2(29,0.4) & 4.0(2.7,0.2) & 2.1(1.5,0.4) \\
		$f_5$  & 1.7(1.0,0.4) & 1.0(1.0,0.2) & 2.6(1.0,1.6) & 0.8(1.0,0.5) & 2.1(1.0,0.5)
	\end{tabular}
	\end{center}
	\caption{The accuracy (at time $t=0.5$ and $t=2$) of the best TDBC approach (this can be the second or third order correction) divided by the accuracy of the best CEC approach is shown for five different reactions $f_1=\sqrt{u+1}$, $f_2=\e^{u/5}$, $f_3=\log(2+u)$, $f_4=1/2+\text{arsinh\hskip 1pt}{u}$, $f_5=\cos u$ and five different advection coefficients $a_1=1+\sin x$, $a_2=\sin(\pi x/2)+2/5$, $a_3=3/2-x$, $a_4=1/5+\e^{-50 (x-1/2)^2}$, $a_5=1 + \sin(2\pi x)/5$. The number in parentheses shows the gain in accuracy achieved by going from CEC2 to CEC3 and from TDBC2 to TDBC3, respectively (values larger than one indicate a gain in accuracy, while values smaller than one indicate a loss in accuracy). The space discretization is conducted by using a second order upwind finite difference stencil with $500$ grid points. \label{tab:comparison}}
\end{table}

\subsection{Dispersive problem}

We consider the following dispersive equation
\begin{equation} \partial_t u(t,x) = i \partial_{xx} u(t,x) + f(u(t,x)), \qquad u(t,0)=u(t,1)=1, \qquad u(0,x)=u_0(x) \label{eq:dispersion} \end{equation}
	and once again compare the unmodified Strang splitting with both the TDBC correction and the CEC correction. The corresponding numerical results are shown in Table \ref{tab:dispersion-2nd}. The local error agrees very well with the convergence analysis conducted in section \ref{sec:convergence} and \cite{einkemmer2015}. On the other hand, the behavior of the global error is rather erratic.
We should emphasize, however, that this is certainly not in contradiction to our convergence analysis. In any case, we observe that the accuracy of both corrections is clearly superior to the unmodified Strang splitting. We also note that, for this example, both corrections perform almost identical.

\begin{table}[htb]
	\begin{center}
	\textbf{Local error}
	\vspace{0.1cm}

	\begin{tabular}{rrrrrrrrr}
		& \multicolumn{2}{c}{unmodified} && \multicolumn{2}{c}{TDBC} && \multicolumn{2}{c}{CEC} \\
		\cline{2-3} \cline{5-6} \cline{8-9}
		\multicolumn{1}{l}{step size} & \multicolumn{1}{l}{$l^{\infty}$ error} & \multicolumn{1}{l}{order} & &
		\multicolumn{1}{l}{$l^{\infty}$ error} & \multicolumn{1}{l}{order} & & \multicolumn{1}{l}{$l^{\infty}$ error} & \multicolumn{1}{l}{order} \\
		\hline
		1.20e-02    & 5.84e-03    & --    && 1.48e-03    & --     && 1.50e-03    & --    \\
		6.00e-03    & 2.79e-03    & 1.07  && 2.72e-04    & 2.45   && 2.70e-04    & 2.47  \\
		3.00e-03    & 1.23e-03    & 1.19  && 3.49e-05    & 2.96   && 3.47e-05    & 2.96  \\
		1.50e-03    & 6.38e-04    & 0.94 && 8.77e-06    & 1.99   && 8.65e-06    & 2.00   \\
		7.50e-04    & 2.95e-04    & 1.11  && 2.11e-06    & 2.05   && 2.08e-06    & 2.05  \\
		3.75e-04    & 1.30e-04    & 1.18  && 5.15e-07    & 2.04   && 5.07e-07    & 2.04  \\
	\end{tabular}
	\vspace{0.25cm}

	\textbf{Global error}
	\vspace{0.1cm}

	\begin{tabular}{rrrrrrrrr}
		& \multicolumn{2}{c}{unmodified} && \multicolumn{2}{c}{TDBC} && \multicolumn{2}{c}{CEC} \\
		\cline{2-3} \cline{5-6} \cline{8-9}
		\multicolumn{1}{l}{step size} & \multicolumn{1}{l}{$l^{\infty}$ error} & \multicolumn{1}{l}{order} & &
		\multicolumn{1}{l}{$l^{\infty}$ error} & \multicolumn{1}{l}{order} && \multicolumn{1}{l}{$l^{\infty}$ error} & \multicolumn{1}{l}{order} \\
		\hline
		1.20e-02    & 2.02e-02    & --    && 2.35e-03    & 1.89  && 2.33e-03    & --    \\
		6.00e-03    & 1.18e-02    & 0.78   && 5.32e-04    & 2.14  && 5.25e-04    & 2.15    \\
		3.00e-03    & 4.77e-03    & 1.30    && 1.09e-04    & 2.28  && 1.08e-04    & 2.29    \\
		1.50e-03    & 1.04e-03    & 2.20     && 4.85e-05    & 1.17  && 4.82e-05    & 1.16    \\
		7.50e-04    & 6.09e-04    & 0.77    && 1.82e-05    & 1.41  && 1.82e-05    & 1.41    \\
		3.75e-04    & 2.20e-04    & 1.47    && 1.44e-05    & 0.34 && 1.44e-05    & 0.34   \\
		1.88e-04    & 9.37e-05    & 1.23    && 4.70e-07    & 4.94  && 4.66e-07    & 4.95    \\
	\end{tabular}
	\end{center}

	\caption{The local (at $t=0$) and global errors for the unmodified Strang splitting as well as the second order TDBC and CEC corrected Strang splitting applied to equation (\ref{eq:dispersion}) with $f(u)=\e^{u-1}$ are shown. The space discretization is conducted by using the standard centered finite difference stencil with $200$ grid points. All problems are integrated until $t=0.19$ and use the initial value $u(0,x)=1+\sin \pi x + i \sin 2 \pi x$.
	\label{tab:dispersion-2nd}}
\end{table}

Now, let us compare these results with the third order corrections shown in Table \ref{tab:dispersion-3rd}. Note that in this case it is once again necessary to compute the first derivative by numeric differentiation. For both corrected versions the error is worse by approximately a factor of three compared to the second order corrections. It should be noted, however, that the convergence is much more predictable for the third order correction. That is, we do not observe the erratic convergence behavior described above. This might be of some interest in practice as an automatic step size controller would assume such a regular behavior (if this is not the case multiple and frequent step size rejection might occur).

\begin{table}[htb]
	\begin{center}
	\textbf{Local error}
	\vspace{0.1cm}

	\begin{tabular}{rrrrrrrrr}
		& \multicolumn{2}{c}{unmodified} && \multicolumn{2}{c}{TDBC} && \multicolumn{2}{c}{CEC} \\
		\cline{2-3} \cline{5-6} \cline{8-9}
		\multicolumn{1}{l}{step size} & \multicolumn{1}{l}{$l^{\infty}$ error} & \multicolumn{1}{l}{order} & &
		\multicolumn{1}{l}{$l^{\infty}$ error} & \multicolumn{1}{l}{order} & & \multicolumn{1}{l}{$l^{\infty}$ error} & \multicolumn{1}{l}{order} \\
		\hline
		1.20e-02    & 5.84e-03    & --    && 1.51e-03    & --     && 1.53e-03    & --      \\
		6.00e-03    & 2.79e-03    & 1.07  && 2.48e-04    & 2.61   && 2.38e-04    & 2.69    \\
		3.00e-03    & 1.23e-03    & 1.19  && 2.92e-05    & 3.09   && 2.81e-05    & 3.08    \\
		1.50e-03    & 6.38e-04    & 0.94 && 3.21e-06    & 3.18   && 3.14e-06    & 3.16    \\
		7.50e-04    & 2.95e-04    & 1.11  && 3.82e-07    & 3.07   && 3.72e-07    & 3.08    \\
		3.75e-04    & 1.30e-04    & 1.18  && 4.65e-08    & 3.04   && 4.64e-08    & 3.00     \\
	\end{tabular}
	\vspace{0.25cm}

	\textbf{Global error}
	\vspace{0.1cm}

	\begin{tabular}{rrrrrrrrr}
		& \multicolumn{2}{c}{unmodified} && \multicolumn{2}{c}{TDBC}  && \multicolumn{2}{c}{CEC} \\
		\cline{2-3} \cline{5-6} \cline{8-9}
		\multicolumn{1}{l}{step size} & \multicolumn{1}{l}{$l^{\infty}$ error} & \multicolumn{1}{l}{order} & &
		\multicolumn{1}{l}{$l^{\infty}$ error} & \multicolumn{1}{l}{order} && \multicolumn{1}{l}{$l^{\infty}$ error} & \multicolumn{1}{l}{order} \\
		\hline
		1.20e-02    & 2.02e-02    & --   && 9.02e-03    & 2.09   && 7.91e-03    & --  \\
		6.00e-03    & 1.18e-02    & 0.78  && 1.84e-03    & 2.29   && 1.73e-03    & 2.19  \\
		3.00e-03    & 4.77e-03    & 1.30    && 4.16e-04    & 2.15   && 4.12e-04    & 2.07  \\
		1.50e-03    & 1.04e-03    & 2.20   && 9.85e-05    & 2.08   && 9.97e-05    & 2.05  \\
		7.50e-04    & 6.09e-04    & 0.77   && 2.42e-05    & 2.03   && 2.44e-05    & 2.03  \\
		3.75e-04    & 2.20e-04    & 1.47   && 6.01e-06    & 2.01   && 6.01e-06    & 2.02   \\
		1.88e-04    & 9.37e-05    & 1.23   && 1.45e-06    & 2.06   && 1.50e-06    & 2.00    \\
	\end{tabular}
	\end{center}

	\caption{The local (at $t=0$) and global errors for the unmodified Strang splitting as well as the third order TDBC and CEC corrected Strang splitting applied to equation (\ref{eq:dispersion}) with $f(u)=\e^{u-1}$ are shown. The space discretization is conducted by using the standard centered finite difference stencil with $200$ grid points. All problems are integrated until $t=0.19$ and use the initial value $u(0,x)=1+\sin \pi x + i \sin 2 \pi x$.
	\label{tab:dispersion-3rd}}
\end{table}

To conclude this section let us investigate the erratic convergence behavior for Strang splitting and the two second order corrections. Note that since the parabolic smoothing property does not apply in this case we can lose up to an order by going from the local to the global error. However, whether this actually happens depends on the precise step size chosen (a phenomenon called resonance; see, for example \cite{hochbruck1999,grimm2006}). Now, for the third order corrections this is not an issue as only order two can be attained globally in any case. Thus, the global error for this scheme behaves as we would expect from a second order method. The behavior described is in complete agreement with Figure \ref{fig:res-step}. In addition, it is interesting to look at the behavior of the local and the global error as a function of time for various step sizes. The corresponding results are shown in Figure \ref{fig:res-evol} and illustrate these resonances from the perspective of global error propagation.

\begin{figure}[htb]
	\begin{center}
		\includegraphics[width=10cm]{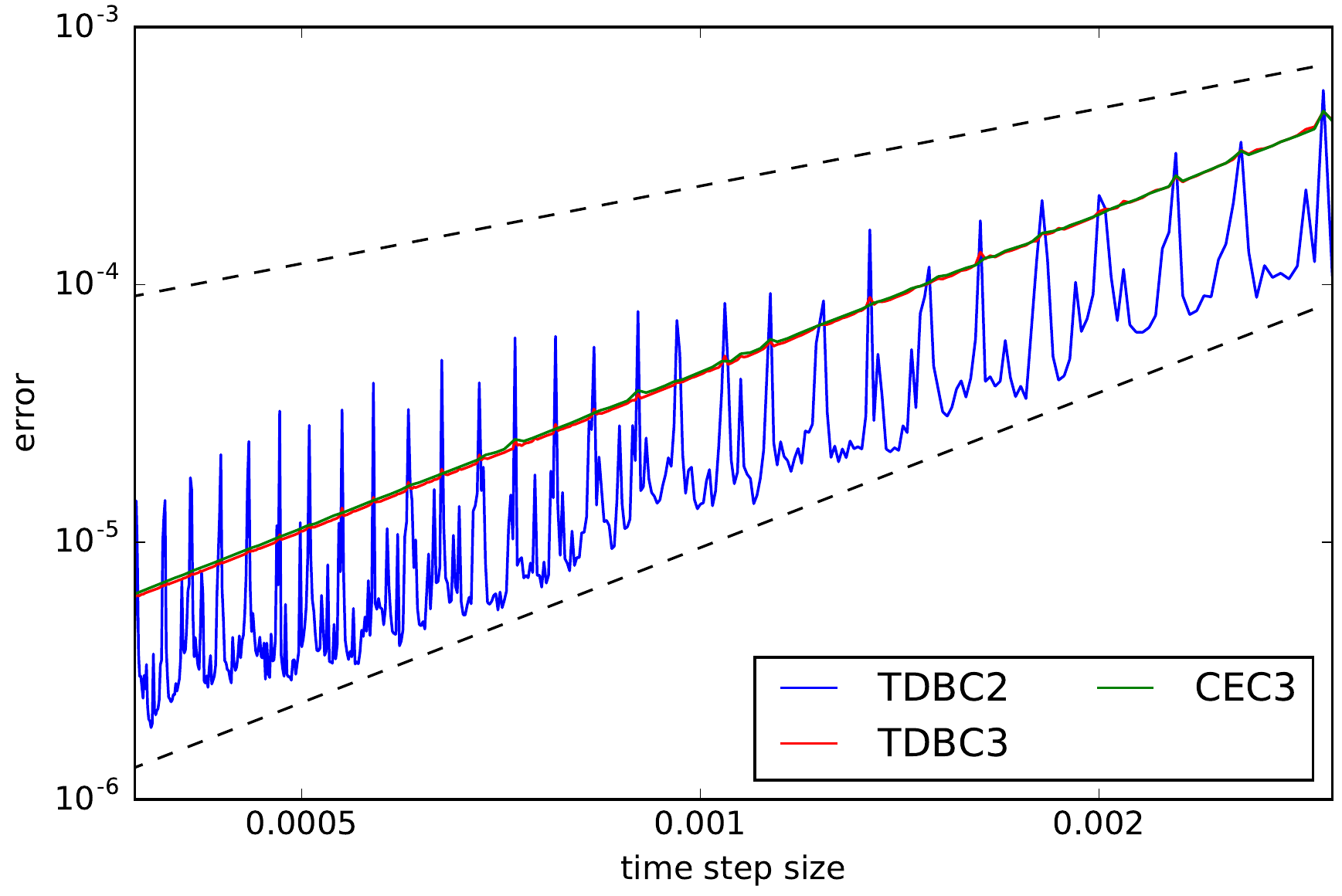}
	\end{center}
	\caption{The global error in the infinity norm as a function of the time step size is shown. The error for TDBC2 and CEC2 is almost identical and therefore only the (erratic) error for TDBC2 is shown in the plot. In addition, the dashed lines are of slope $1$ and $2$, respectively.
	In all simulations equation (\ref{eq:dispersion}) with $f(u)=\e^{u-1}$ is employed and the initial value $u(0,x)=1+\sin \pi x + i \sin 2 \pi x$ is imposed.	
	The space discretization is conducted by using the standard centered finite difference stencil with $200$ grid points and all simulations are conducted until $t=0.19$.\label{fig:res-step}
}
\end{figure}

\begin{figure}[htb]
	\begin{center}
		\includegraphics[width=10cm]{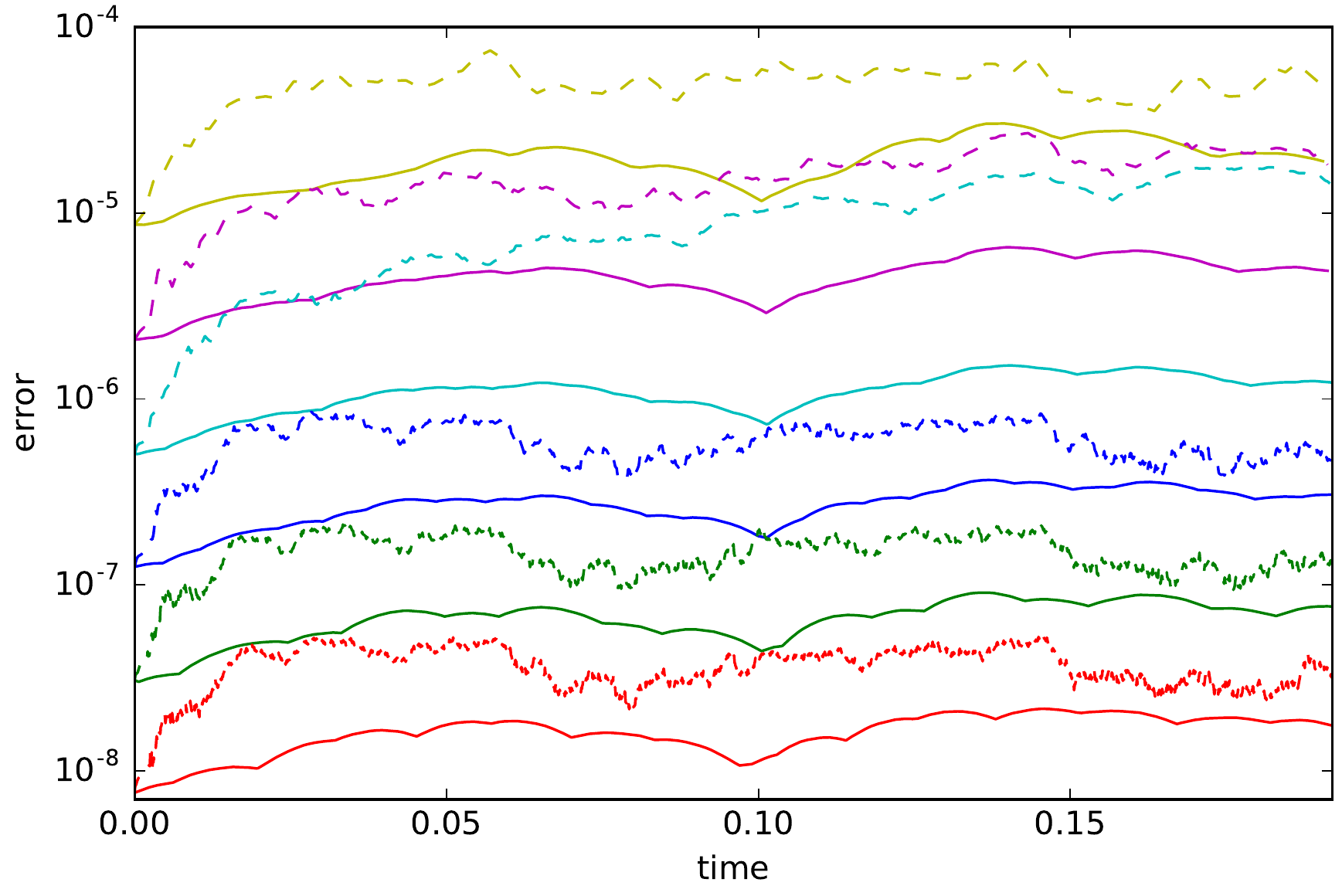}

		\includegraphics[width=10cm]{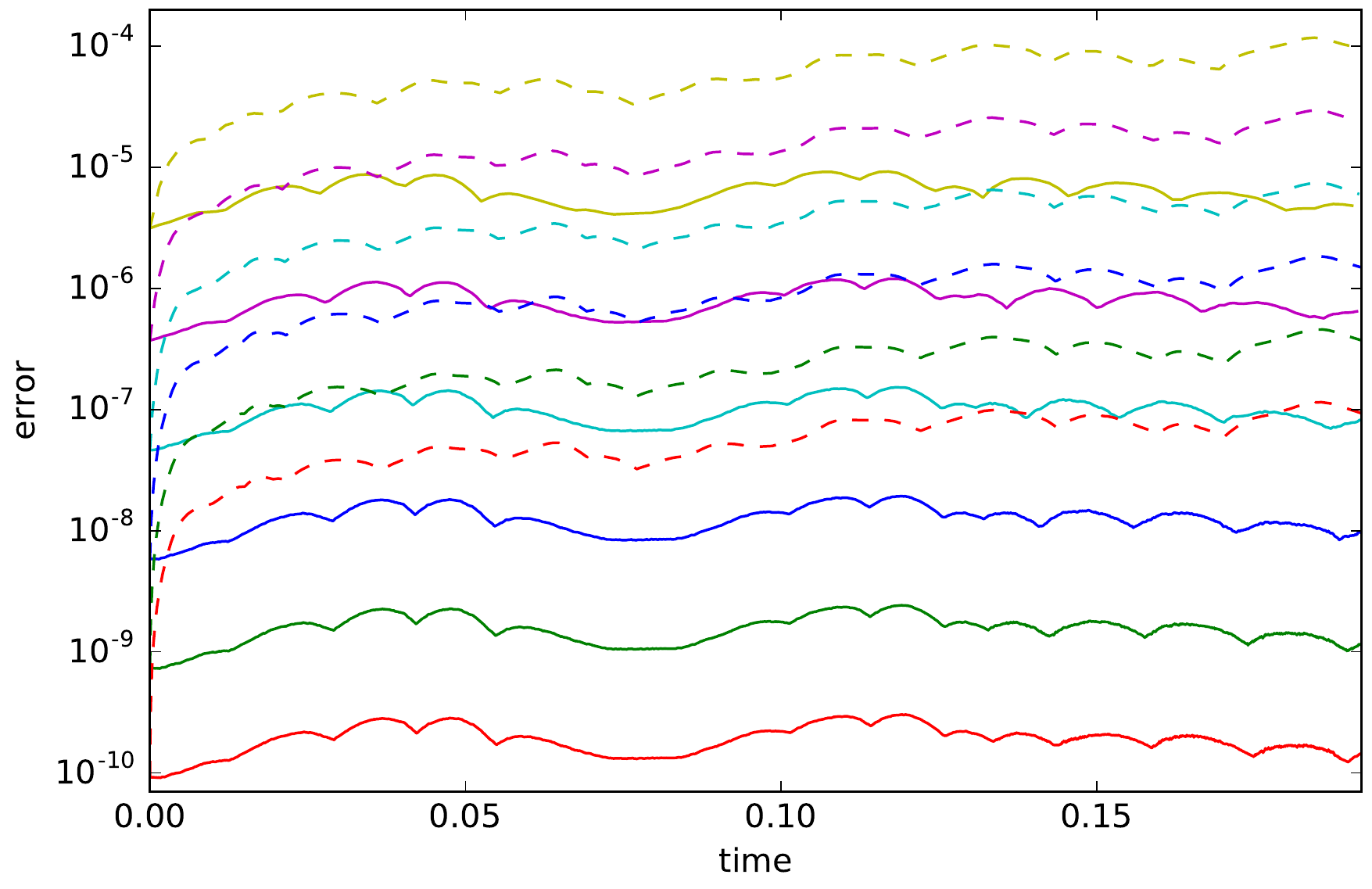}
	\end{center}
	\caption{The local (full lines) and global errors (dashed lines) in the infinity norm are shown as a function of time for the second order CEC (top) and the third order CEC (bottom) corrections. The following step sizes are used (from top to bottom in this order in both cases): $1.5\cdot10^{-3}$ (yellow), $7.5\cdot10^{-4}$ (magenta), $3.75\cdot10^{-4}$ (cyan), $1.88\cdot10^{-4}$ (blue), $9.38\cdot10^{-5}$ (green), $4.69\cdot10^{-5}$ (red). In all simulations equation (\ref{eq:dispersion}) with $f(u)=\e^{u-1}$ is employed and the initial value $u(0,x)=1+\sin \pi x + i \sin 2 \pi x$ is imposed. The space discretization is conducted by using the standard centered finite difference stencil with $200$ grid points.\label{fig:res-evol}}
\end{figure}

\section{Conclusion \label{sec:conclusion}}

In this paper we have performed a mathematically rigorous convergence analysis of the TDBC method for a non-linear problem, thus complementing the results that have been obtained earlier for the CEC method \cite{einkemmer2015}. This result agrees very well with the numerical simulations conducted.

Both methods have certain advantages and disadvantages from an implementation point of view. Furthermore, we have found that in most cases the accuracy of both methods for the second order PDEs considered is quite similar. However, for the advection-reaction problem the CEC method significantly outperforms the TDBC method (for some problems by more than an order of magnitude).

We also find that whether performing further corrections to obtain a numerical scheme of locally third order accuracy is advantageous depends on the specific problem considered. Gains by more than an order of magnitude as well as a diminishing of the accuracy by at least a factor of $5$ have been observed. In addition, the third order correction results in a predictable convergence behavior for the dispersion-reaction equation (which is not the case if only the second order correction is applied).

\bibliographystyle{plain}
\bibliography{papers}

\begin{thebibliography}{10}

\bibitem{alonsomallo2016}
I.~Alonso-Mallo, B.~Cano, and N.~Reguera.
\newblock {Avoiding order reduction when integrating linear initial boundary
  value problems with exponential splitting methods}.
\newblock {\em Private copy}, 2016.

\bibitem{alonsomallo2015}
I.~Alonso-Mallo, B.~Cano, and N.~Reguera.
\newblock {Avoiding order reduction when integrating linear initial boundary
  value problems with Lawson methods}.
\newblock {\em IMA J. Numer. Anal.}, in press.

\bibitem{bao2002}
W.~Bao, S.~Jin, and P.A. Markowich.
\newblock {On time-splitting spectral approximations for the Schr{\"o}dinger
  equation in the semiclassical regime}.
\newblock {\em J. Comput. Phys.}, 175(2):487--524, 2002.

\bibitem{canoreguera2017}
B.~Cano and N.~Reguera.
\newblock {Avoiding order reduction when integrating nonlinear Schr\"odinger
  equation with Strang method}.
\newblock {\em J. Comput. Appl. Math.}, 316:86--99, 2017.

\bibitem{carpenter1993}
M.H. Carpenter, D.~Gottlieb, S.~Abarbanel, and W.S. Don.
\newblock {The theoretical accuracy of Runge--Kutta time discretizations for
  the initial boundary value problem: a study of the boundary error}.
\newblock {\em SIAM J. Sci. Comput.}, 16:1241--1252, 1995.

\bibitem{casas2015}
F.~Casas, N.~Crouseilles, E.~Faou, and M.~Mehrenberger.
\newblock {High-order Hamiltonian splitting for Vlasov--Poisson equations}.
\newblock {\em Numer. Math.}, 135(3):769--801, 2017.

\bibitem{cheng1976}
C.~Cheng and G.~Knorr.
\newblock {The integration of the Vlasov equation in configuration space}.
\newblock {\em J. Comput. Phys.}, 22(3):330--351, 1976.

\bibitem{connors2014}
J.M. Connors, J.W. Banks, J.A. Hittinger, and C.S. Woodward.
\newblock {Quantification of errors for operator-split advection--diffusion
  calculations}.
\newblock {\em Comput. Methods Appl. Mech. Engrg.}, 272:181--197, 2014.

\bibitem{einkemmer1401}
N.~Crouseilles, L.~Einkemmer, and E.~Faou.
\newblock {A Hamiltonian splitting for the Vlasov--Maxwell system}.
\newblock {\em J. Comput. Phys.}, 238:224--240, 2015.

\bibitem{einkemmer1602}
N.~Crouseilles, L.~Einkemmer, and E.~Faou.
\newblock {An asymptotic preserving scheme for the relativistic Vlasov--Maxwell
  equations in the classical limit}.
\newblock {\em Comput. Phys. Commun.}, 209:13--26, 2016.

\bibitem{dawson1991}
C.N. Dawson and M.F. Wheeler.
\newblock Time-splitting methods for advection-diffusion-reaction equations
  arising in contaminant transport.
\newblock In R.~O'Malley, editor, {\em Proceedings of ICIAM 91 (Washington, DC,
  1991)}, pages 71--82. SIAM, Philadelphia, 1992.

\bibitem{descombes2001}
S.~Descombes.
\newblock Convergence of a splitting method of high order for
  reaction-diffusion systems.
\newblock {\em Math. Comp.}, 70:1481--1501, 2001.

\bibitem{einkemmer1207}
L.~Einkemmer and A.~Ostermann.
\newblock {Convergence analysis of Strang splitting for Vlasov-type equations}.
\newblock {\em SIAM J. Numer. Anal.}, 52(1):140--155, 2014.

\bibitem{einkemmer1408}
L.~Einkemmer and A.~Ostermann.
\newblock {A splitting approach for the Kadomtsev--Petviashvili equation}.
\newblock {\em J. Comput. Phys.}, 299:716--730, 2015.

\bibitem{einkemmer2015}
L.~Einkemmer and A.~Ostermann.
\newblock Overcoming order reduction in diffusion-reaction splitting. {P}art 1:
  {D}irichlet boundary conditions.
\newblock {\em SIAM J. Sci. Comput.}, 37(3):A1577--A1592, 2015.

\bibitem{einkemmer2016}
L.~Einkemmer and A.~Ostermann.
\newblock {Overcoming order reduction in diffusion-reaction splitting. Part 2:
  oblique boundary conditions}.
\newblock {\em SIAM J. Sci. Comput.}, 38(6):A3741--A3757, 2016.

\bibitem{faou2012}
E.~Faou.
\newblock {\em Geometric numerical integration and Schr\"odinger equations}.
\newblock European Mathematical Society, Z\"urich, 2012.

\bibitem{gerisch2002}
A.~Gerisch and J.G. Verwer.
\newblock {Operator splitting and approximate factorization for
  taxis-diffusion-reaction models}.
\newblock {\em Appl. Numer. Math.}, 42(1):159--176, 2002.

\bibitem{grandgirard2006}
V.~Grandgirard, M.~Brunetti, P.~Bertrand, N.~Besse, X.~Garbet, P.~Ghendrih,
  G.~Manfredi, Y.~Sarazin, O.~Sauter, E.~Sonnendr{\"u}cker, J.~Vaclavik, and
  L.~Villard.
\newblock {A drift-kinetic Semi-Lagrangian 4D code for ion turbulence
  simulation}.
\newblock {\em J. Comput. Phys.}, 217:395--423, 2006.

\bibitem{grimm2006}
V.~Grimm and M.~Hochbruck.
\newblock {Error analysis of exponential integrators for oscillatory
  second-order differential equations}.
\newblock {\em J. Phys. A: Math. Gen.}, 39(19):5495--5507, 2006.

\bibitem{hochbruck1999}
M.~Hochbruck and C.~Lubich.
\newblock {Exponential integrators for quantum-classical molecular dynamics}.
\newblock {\em BIT}, 39(4):620--645, 1999.

\bibitem{holden2013}
H.~Holden, C.~Lubich, and N.~Risebro.
\newblock {Operator splitting for partial differential equations with Burgers
  nonlinearity}.
\newblock {\em Math. Comp.}, 82(281):173--185, 2013.

\bibitem{hundsdorfer1995}
W.~Hundsdorfer and J.G. Verwer.
\newblock {A note on splitting errors for advection-reaction equations}.
\newblock {\em Appl. Numer. Math.}, 18(1):191--199, 1995.

\bibitem{hundsdorfer2003book}
W.~Hundsdorfer and J.G. Verwer.
\newblock {\em {Numerical solution of time-dependent
  advection-diffusion-reaction equations}}.
\newblock Springer, Berlin, 2003.

\bibitem{klein2011}
C.~Klein and K.~Roidot.
\newblock {Fourth order time-stepping for Kadomtsev--Petviashvili and
  Davey--Stewartson equations}.
\newblock {\em SIAM J. Sci. Comput.}, 33(6):3333--3356, 2011.

\bibitem{leveque1983}
R.J. LeVeque and J.~Oliger.
\newblock {Numerical methods based on additive splittings for hyperbolic
  partial differential equations}.
\newblock {\em Math. Comp.}, 40(162):469--497, 1983.

\bibitem{lubich2008}
C.~Lubich.
\newblock {On splitting methods for Schr{\"o}dinger--Poisson and cubic
  nonlinear Schr{\"o}dinger equations}.
\newblock {\em Math. Comp.}, 77(264):2141--2153, 2008.

\bibitem{spee1998}
E.J. Spee, J.G. Verwer, P.M. de~Zeeuw, J.G. Blom, and W.~Hundsdorfer.
\newblock {A numerical study for global atmospheric transport-chemistry
  problems}.
\newblock {\em Math. Comput. Simulat.}, 48(2):177--204, 1998.

\end{thebibliography}

\end{document}